\documentclass{amsart}

\usepackage{hyperref}

\usepackage{cleveref}

\Crefname{equation}{Eq.}{Eqs.}

\usepackage{url}

\usepackage{tikz}
\usepackage{amsfonts}

\allowdisplaybreaks

\newcommand{\F}{\mathcal{F}}
\newcommand{\Z}{\mathbb{Z}}

\newcommand{\abs}{\mathrel{\unlhd}}
\newcommand{\Ring}{E[[x]]}

\newcommand{\R}{\mathbb{R}}
\newcommand{\N}{\mathbb{N}}
\newcommand{\DiZ}{E[[x]]}

\DeclareMathOperator{\od}{od}
\DeclareMathOperator{\ev}{ev}

\newtheorem{definition}{Definition}
\newtheorem{proposition}{Proposition}
\newtheorem{lemma}{Lemma}
\newtheorem{theorem}{Theorem}
\newtheorem{corollary}{Corollary}

\title[Breaks in log-concavity of spherically symmetric trees]{Unbounded log-concavity breaks in independence polynomials of spherically symmetric trees}

\author{C\'esar Bautista-Ramos}
\address{Fac. Ciencias de la Computaci\'on, Benem\'erita Universidad Aut\'onoma de Puebla, 14 sur y Av. San Claudio, CCO3-304, Ciudad Universitaria, Puebla Pue. 72570, Mexico 
}

\email{cesar.bautista@correo.buap.mx}

\begin{document}

\begin{abstract}
  Using a dioid algebraic structure, we show that there exist spherically symmetric trees $T(2^m 1^n)$ whose independence polynomials exhibit multiple breaks in log-concavity, a result established by estimating the asymptotic growth of the coefficients of these polynomials. Provided the parameter $n$ is a sufficiently large odd integer, the number of breaks is bounded below by the Jacobsthal numbers. This result affirmatively answers a question raised by F. Bencs [{D.~Galvin}, \emph {Trees with non log-concave independent set sequences}, arXiv 2502.10654.v2].

  \smallskip
\noindent \textbf{Keywords.} Independence polynomial, Independence set, Log-concave, Spherically symmetric tree,  Jacobsthal number, Dioid
\end{abstract}

%\subjclass[2020]{05C31, 05A20, 16Y60, 05C05, 05C69}

\maketitle

\section{Introduction}
In 1987, P. Erd\H{o}s and his coauthors posed the question of whether the independent set sequence of any tree is unimodal \cite{Alavi}. Despite numerous efforts, this question remains unanswered to this day. Let us recall that the independent set sequence of a graph is formed by the coefficients of its independence polynomial, which can be defined inductively as follows \cite{Arocha, Gutman}: given a simple graph $G$ and a vertex $v$ of $G$, the \emph{independence polynomial of $G$}, denoted by $I(G;x)$, satisfies the recurrence relation
\begin{equation}\label{eq:Aro1}
	I(G;x)=I(G\backslash v;x)+xI(G\backslash N[v];x),
\end{equation}
where $N[v]$ denotes the closed neighborhood of $v$ and the backslash denotes vertex deletion. Furthermore, if $G$ is the disjoint union of two graphs $G_1$ and $G_2$, then
\begin{equation}\label{eq:Aro2}
	I(G;x)=I(G_1;x)I(G_2;x),
\end{equation}
with the initial condition $I(K_0;x)=1$, where $K_0$ is the graph with no vertices.

Although it is customary to define an independent set at this stage, we omit this definition to underscore our focus on the purely algebraic aspects of the independence polynomial, which follow directly from \Cref{eq:Aro1,eq:Aro2}. We refer the interested reader to \cite{Galvin, Hopkins, Kadrawi2, Levit} and the references therein. Recall that a finite sequence of real numbers $(a_i)_{i=0}^d$ is \emph{unimodal} if there exists an index $k$ such that $a_0 \le a_1 \le \dots \le a_k \ge a_{k+1} \ge \dots \ge a_d$, and it is \emph{log-concave} if $a_i^2 \ge a_{i-1}a_{i+1}$ for all $1 \le i \le d-1$.

Since log-concavity implies unimodality for nonnegative sequences without internal zeros and is often easier to prove, many authors establish unimodality via log-concavity. In fact, V.~E. Levit and E. Mandrescu conjectured that the independence sequence of any tree is log-concave \cite{Levit}. This conjecture was later disproved by O. Kadrawi et al.~\cite{Kadrawi}, with further counterexamples provided in \cite{Galvin,Kadrawi2}. A common feature of these early counterexamples is that log-concavity seems to fail at exactly one index. A natural question is whether there exist trees whose independence sequences exhibit multiple breaks in log-concavity. Inspired by the tree constructions in \cite{Galvin}, the present author proved that an arbitrary number of breaks is possible \cite{yo}. Furthermore, F.~Bencs noted that another type of tree, the \emph{spherically symmetric tree} denoted by $T(2^m1^n)$, exhibits this behavior for relatively small values of $m$ and $n$ \cite{Galvin}. It is worth mentioning that spherically symmetric trees arise in probability theory and statistics, particularly in the study of random walks \cite{Chiclana,Lyons}.

Building on these observations, F. Bencs asked whether there exist trees $T(2^m1^n)$ with an arbitrary number of breaks in their independence sequences \cite[Question 3.2]{Galvin}. This paper answers this question in the affirmative. More precisely, it is shown that for any $m \geq 3$ and any sufficiently large odd integer $n$, the independence sequence of $T(2^m1^n)$ has at least $J_{m-3}$ breaks in log-concavity, where $J_m$ denotes the $m$-th Jacobsthal number \cite{Horadam} (see \Cref{c:final}). Establishing this result requires approximately solving the non-linear recurrence that arises from~\Cref{eq:Aro1,eq:Aro2} for this specific case. 

Our analysis focuses on the algebraic structure of the exact asymptotic order of growth, as $n\to\infty$, of the coefficients of the independence polynomials $I(T(2^m1^n);x)$. This algebraic structure is known as a \emph{dioid} \cite{Gondran}. The use of dioids for studying diverse phenomena ranges from nonlinear partial differential equations in physics \cite{Gondran}, through the tropicalization of algebraic geometry \cite{Itenberg}, to modeling manufacturing processes \cite{Brunsch}.  In this paper, we use a dioid based on the exact asymptotic order $\Theta$ as an algebraic framework for solving the nonlinear recurrence satisfied by the independence polynomials of $T(2^m 1^n)$ in terms of novel \emph{deformed Jacobsthal polynomials}, which generalize the aforementioned Jacobsthal numbers. This approach proves particularly useful because these deformed polynomials readily exhibit multiple breaks in log-concavity (see \Cref{l:breaks}).

The remainder of this paper proceeds as follows. \Cref{s:defs} establishes the necessary definitions, notation, and algebraic framework. \Cref{s:Jacobsthal} introduces the deformed Jacobsthal polynomials and examines their absorption algebra, laying the groundwork to establish an idempotent property for these polynomials in \Cref{s:geo} (\Cref{thm:idem}). Finally, \Cref{s:JacSst} applies these algebraic results to the independence polynomials of spherically symmetric trees $T(2^m 1^n)$ (\Cref{thm:main}). This final analysis demonstrates that these polynomials can exhibit an arbitrary number of breaks in log-concavity (see \Cref{c:final}).

\section{Definitions, notation, and algebraic framework}\label{s:defs} 
As usual, $\N$ and $\Z$ denote the sets of nonnegative integers and integers, respectively. The symbol $\R_{\geq 0}$ denotes the set of nonnegative real numbers, and $P_n$ represents the path graph with $n$ vertices.
Recall that a rooted graph $G$ with root $r$ is \emph{spherically symmetric} if the following condition holds: all vertices at the same distance from the root $r$ have the same degree. The trees defined below provide examples of spherically symmetric graphs.

\begin{definition}
Let $m$ and $n$ be nonnegative integers. The tree $T(2^m1^n)$ is defined as the $m$-level perfect binary tree with a copy of the path $P_n$ dangling from each of its leaves (see \Cref{fig:T}).
\end{definition}

\begin{figure}
\centering

\begin{tikzpicture}[xscale=0.85, yscale=0.6,
    vertex/.style={circle, draw, fill=white, minimum size=3.5pt, inner sep=0pt},
    edge/.style={draw, thin}
]

  % --- Level 0: Root ---
  \node[vertex] (N-0-0) at (0, 0) {};

  % --- Level 1 (2 nodes) ---
  \foreach \k in {0,1} {
    \pgfmathsetmacro{\x}{(-1 + 2*\k) * 3.2}
    \node[vertex] (N-1-\k) at (\x, -0.9) {};
    \draw[edge] (N-0-0) -- (N-1-\k);
  }

  % --- Level 2 (4 nodes) ---
  \foreach \k in {0,...,3} {
    \pgfmathsetmacro{\x}{(-3 + 2*\k) * 1.6}
    \pgfmathtruncatemacro{\p}{floor(\k/2)}
    \node[vertex] (N-2-\k) at (\x, -1.8) {};
    \draw[edge] (N-1-\p) -- (N-2-\k);
  }

  % --- Level 3 (8 nodes) ---
  \foreach \k in {0,...,7} {
    \pgfmathsetmacro{\x}{(-7 + 2*\k) * 0.8}
    \pgfmathtruncatemacro{\p}{floor(\k/2)}
    \node[vertex] (N-3-\k) at (\x, -2.7) {};
    \draw[edge] (N-2-\p) -- (N-3-\k);
  }

  % --- Level 4 (16 nodes) ---
  \foreach \k in {0,...,15} {
    \pgfmathsetmacro{\x}{(-15 + 2*\k) * 0.4}
    \pgfmathtruncatemacro{\p}{floor(\k/2)}
    \node[vertex] (N-4-\k) at (\x, -3.6) {};
    \draw[edge] (N-3-\p) -- (N-4-\k);
  }

  % --- Level 5: Binary Core Leaves (32 nodes) ---
  \foreach \k in {0,...,31} {
    \pgfmathsetmacro{\x}{(-31 + 2*\k) * 0.2}
    \pgfmathtruncatemacro{\p}{floor(\k/2)}
    \node[vertex] (N-5-\k) at (\x, -4.5) {};
    \draw[edge] (N-4-\p) -- (N-5-\k);
  }

  % --- Pending Paths P_4 (3 additional nodes extended below each leaf) ---
  \foreach \k in {0,...,31} {
    \pgfmathsetmacro{\x}{(-31 + 2*\k) * 0.2}
    \node[vertex] (P1-\k) at (\x, -5.25) {};
    \node[vertex] (P2-\k) at (\x, -6.00) {};
    \node[vertex] (P3-\k) at (\x, -6.75) {};

    \draw[edge] (N-5-\k) -- (P1-\k);
    \draw[edge] (P1-\k) -- (P2-\k);
    \draw[edge] (P2-\k) -- (P3-\k);
  }

\end{tikzpicture}
\caption{The spherically symmetric tree $T(2^5 1^3)$. A copy of the path $P_3$ dangles from each of the $2^5$ leaves of the $5$-level perfect binary tree.}
  \label{fig:T}
\end{figure}

Let $t_{m,n}$ denote the independence polynomial of the spherically symmetric tree $T(2^m1^n)$. Then, the recursive formulas in \Cref{eq:Aro1,eq:Aro2} yield the following non-linear recurrence:
\begin{equation}
\label{eq:Aro}
t_{m,n}=t_{m-1,n}^2+x\,t_{m-2,n}^4, \quad m\geq 2,
\end{equation}
with initial conditions
\[
t_{0,n}=I(P_{n+1};x),\quad t_{1,n}=I(P_{2n+3};x).
\]

Assume that $n$ is odd and write $n=2k+1$. The primary strategy involves solving the recurrence \eqref{eq:Aro} approximately by considering only the exact asymptotic order of the coefficients of $t_{m,2k+1}$ as $k\to\infty$. The following discussion establishes the appropriate algebraic framework to develop this approach.

Let $\F$ be the semiring of functions $f(k) \colon \N \to \R_{\geq 0}$ under pointwise addition and multiplication. The exact asymptotic order $\Theta$ is a \emph{congruence relation} on $\F$, inducing a semiring structure on the quotient $E = \F/\Theta$ \cite[pp.~95--96]{Golan}. Furthermore, since $f(k) + f(k) \in \Theta(f(k))$ for all $f \in \F$, $E$ is a \emph{dioid} \cite{Gondran}. This naturally extends to a dioid structure on the formal power series $E[[x]]$ \cite[Sec.~2.2, p.~54]{Gondran}, which,  like any dioid, carries a \emph{canonical} partial order defined by the following absorption relation.

\begin{definition}
  Given $p, q \in \Ring$, the series $p$ \emph{is absorbed by} $q$ (or $q$ \emph{absorbs} $p$) if $p+q =q$. In such a case, we write $p \abs q$.
\end{definition}

The following properties follow directly from the definition.
\begin{proposition}\label{p:Aabs} Let $p, q, r, s \in \Ring$. Then:
  \begin{enumerate}
  \item\label{p:Aabs1} $p\abs q$ implies $p+r\abs q+r$ and $pr\abs qr$;
  \item\label{p:Aabs1a} $p\abs q$ and $r\abs s$ imply $p+r\abs q+s$ and $pr\abs qs$;
  \item\label{p:Aabs2} if $q=p+r$ then $p\abs q$;
  \item\label{p:Aabs3} if $p\abs q$ and $r\abs q$ then $p+r\abs q$;
    \item\label{p:Aabs4} if $p\abs q$ then $p\abs r+q$.
  \end{enumerate}
\end{proposition}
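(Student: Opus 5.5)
The plan is to derive everything from the axioms of a commutative semiring in which addition is idempotent. Idempotence holds in $\Ring$ because $E$ is a dioid with $e+e=e$ for every $e\in E$: this is the statement $f+f\in\Theta(f)$. It lifts coefficientwise to $\Ring$, since the sum of formal power series is computed coefficientwise. The only other facts needed are associativity and commutativity of $+$ and $\cdot$, together with distributivity, all inherited by $E[[x]]$ from $E$. Throughout, I unfold $p\abs q$ as the equation $p+q=q$.

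I would prove the items in the following order.

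\emph{Item (\ref{p:Aabs1}).} Assume $p+q=q$. Then
\[
(p+r)+(q+r)=(p+q)+(r+r)=q+r,
\]
using idempotence $r+r=r$. Likewise
\[
pr+qr=(p+q)r=qr
\]
by distributivity.

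\emph{Item (\ref{p:Aabs1a}).} For the sum, apply (\ref{p:Aabs1}) twice and use transitivity of $\abs$:
\[
p+r\abs q+r \quad\text{and}\quad q+r\abs q+s.
\]
Transitivity is immediate: if $a+b=b$ and $b+c=c$, then $a+c=a+(b+c)=(a+b)+c=b+c=c$. For the product, (\ref{p:Aabs1}) gives $pr\abs qr$, and by commutativity of multiplication it also gives $qr\abs qs$. Transitivity then yields $pr\abs qs$.

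\emph{Item (\ref{p:Aabs2}).} If $q=p+r$, then
\[
p+q=(p+p)+r=p+r=q.
\]

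\emph{Item (\ref{p:Aabs3}).} If $p+q=q$ and $r+q=q$, then
\[
(p+r)+q=p+(r+q)=p+q=q.
\]

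\emph{Item (\ref{p:Aabs4}).} If $p+q=q$, then
\[
p+(r+q)=r+(p+q)=r+q.
\]

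There is no real obstacle here. The only point needing care is to justify that $\Ring$ is a commutative semiring with idempotent addition, so that every step above is legitimate. This means checking that $\Theta$ is compatible with pointwise $+$ and $\cdot$ on $\F$, which is the cited congruence property, and that the Cauchy product on $E[[x]]$ is well defined. The latter holds because each coefficient of a product is a finite sum in $E$. Once these structural facts are in place, each item is a one-line equational computation.
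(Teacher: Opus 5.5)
Your proposal is correct and takes the same approach as the paper, which gives no written proof and only states that these properties follow directly from the definition of $\abs$. You supply exactly those one-line equational checks, and you correctly identify idempotent addition in $\Ring$ (inherited coefficientwise from $f+f\in\Theta(f)$), together with the commutative semiring axioms, as all that is needed.
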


In the subsequent analysis, we make extensive use of \Cref{p:Aabs} without further warning. 

\section{Deformed Jacobsthal polynomials and their absorption algebra}\label{s:Jacobsthal}
This section introduces the deformed Jacobsthal polynomials, which form the central focus of this analysis. These polynomials are defined because they capture the asymptotic growth of the coefficients of the independence polynomials of the trees $T(2^m1^n)$ as $n\to\infty$, as shown in \Cref{thm:main}. In fact, this result follows from an idempotent property of the Jacobsthal polynomials within the dioid $E[[x]]$. This idempotent property constitutes the main result of this section (\Cref{thm:idem}). The discussion begins by recalling the Jacobsthal numbers \cite{Horadam}.

\begin{definition}\label{def:2}
  \begin{enumerate}
  \item The \emph{Jacobsthal sequence} $(J_m)_{m \geq 0}$ is defined by the recurrence
    \[
      J_m = J_{m-1} + 2J_{m-2}, \quad m \geq 2,
    \]
    with the initial conditions $J_0 = 0$ and $J_1 = 1$.
  \item For any integer $m \geq 0$, let $K_m = k^{2^m} x^{J_m} \in \Ring$.
  \end{enumerate}
\end{definition}

The Binet-type formula for the Jacobsthal numbers is given by
\begin{equation}\label{eq:cldJ}
	J_m=\frac{2^m-(-1)^m}{3},
\end{equation}
which readily establishes the following frequently used identities for the monomials $K_m$:
\begin{equation}\label{eq:Ks}
K_{m}=K_{m-1}K_{m-2}^2 \quad \text{and} \quad x^{\ev(m)}K_m=x^{\od(m)}K_{m-1}^2,
\end{equation}
where the parity functions $\od$ and $\ev$ are defined as follows.

\begin{definition}
The parity functions $\od \colon \N \to \{0,1\}$ and $\ev \colon \N \to \{0,1\}$ are defined by
\[
\od(m)=
\begin{cases}
  1, &\text{if $m$ is odd;}\\
  0, &\text{if $m$ is even,}
\end{cases} \quad \text{and} \quad \ev(m)=1-\od(m).
\]
\end{definition}

The preceding definitions serve as auxiliary elements to define the deformed Jacobsthal polynomials.

\begin{definition}[Deformed Jacobsthal polynomials]
  Let $(p_m)_{m\geq 0}$ be a sequence of polynomials in $\Ring$, defined recursively by
  \[
    p_m = K_{m-2}^{2\od(m)}p_{m-1} + K_{m-1}^{\ev(m)}(1+K_{m-2})p_{m-2}, \quad m \geq 2,
  \]
  with initial conditions $p_0=0$ and $p_1=1$.
\end{definition}

Note that evaluating the polynomials $p_m$ at $x=k=1$ recovers the Jacobsthal numbers. Setting $x=0$ and applying induction on $m$ yields $p_m=1$ for all $m\geq 1$; that is, for $m\geq 1$, the polynomial $p_m$ has a constant term of $1$. Furthermore, \Cref{p:Aabs} \Cref{p:Aabs2} immediately establishes the following result.
\begin{lemma}\label{l:triv} 
If $m\geq 2$, then $K_{m-2}^{2\od(m)}p_{m-1}$ is absorbed by $p_m$.
\end{lemma}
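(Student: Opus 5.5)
The statement follows at once from \Cref{p:Aabs} \Cref{p:Aabs2}, which says that $p\abs q$ whenever $q=p+r$ for some $r\in\Ring$. So the plan is simply to exhibit $p_m$ as such a sum, with the term $K_{m-2}^{2\od(m)}p_{m-1}$ as one summand.

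Fix $m\geq 2$ and set
\[
p := K_{m-2}^{2\od(m)}p_{m-1}, \qquad r := K_{m-1}^{\ev(m)}(1+K_{m-2})p_{m-2}.
\]
Both are well-defined elements of $\Ring$. Indeed, $\Ring$ is a semiring, the monomials $K_j=k^{2^j}x^{J_j}$ lie in $\Ring$, and the exponents $2\od(m)$ and $\ev(m)$ are nonnegative integers. The defining recurrence of the deformed Jacobsthal polynomials then reads literally $p_m = p + r$.

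Applying \Cref{p:Aabs} \Cref{p:Aabs2} with $q=p_m$ gives $p\abs p_m$, which is the claim. No case split on the parity of $m$ is needed, since the parity only affects the exponents and not the additive decomposition.

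There is no real obstacle. The only point to check is that the relation $p\abs q$ for $q=p+r$ does not require $r$ to be nonzero or to be of any special form. This holds because, in the dioid, $p+q = p+p+r = p+r = q$ by idempotence of addition ($p+p=p$), which is inherited from $E$ coefficientwise. The case $m=2$, where $p_{m-2}=p_0=0$ and hence $r=0$, is therefore covered as well.
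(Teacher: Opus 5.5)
Your proof is correct and is exactly the paper's argument: the paper obtains this lemma immediately from \Cref{p:Aabs} \Cref{p:Aabs2}, applied to the defining recurrence $p_m = K_{m-2}^{2\od(m)}p_{m-1} + K_{m-1}^{\ev(m)}(1+K_{m-2})p_{m-2}$. Your extra check is also sound: the idempotence of addition in $\Ring$ justifies that proposition, and it covers the degenerate case $m=2$, where the second summand vanishes.
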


The following straightforward lemma proves essential for subsequent proofs.
\begin{lemma}\label{l:red} 
For any $m\geq 1$ and $u\geq 0$, $p_u$ is absorbed by $p_m p_u$.
\end{lemma}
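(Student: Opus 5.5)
The key observation is that $p_m$ has constant term $1$ for $m\ge 1$, so it can be written as $p_m = 1 + r_m$ for some $r_m\in\Ring$. The lemma then reduces to the distributive law together with \Cref{p:Aabs}~\Cref{p:Aabs2}. Note that $r_m$ really does lie in $\Ring$: the coefficients of $p_m$ are classes in $E=\F/\Theta$ of nonnegative functions, so no subtraction is involved, and $r_m$ consists simply of the nonconstant terms of $p_m$. The constant term of $p_m$ is the class of the constant function $1$, and this class is the multiplicative identity of $E$.

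\textbf{Step 1: verify the constant term.} The paper sets $x=0$ and argues by induction. I will make this precise by induction on $m\ge 1$.
\begin{itemize}
\item For $m=1$, we have $p_1=1$.
\item For $m=2$, we have $p_2=K_0^{0}p_1+K_1(1+K_0)p_0=1$, using $p_0=0$.
\item For $m\ge3$, look at each term of the recursion. The factor $K_{m-2}^{2\od(m)}$ has $x$-degree $2\od(m)J_{m-2}$. The factor $K_{m-1}^{\ev(m)}$ has $x$-degree $\ev(m)J_{m-1}$. The factor $1+K_{m-2}$ has constant term $1$, since $J_{m-2}\ge1$ when $m\ge3$.
\end{itemize}
The case $m\ge3$ needs some care. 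If $m$ is odd, the second summand has constant term $p_{m-2}(0)=1$, but the first summand $K_{m-2}^2p_{m-1}$ has no constant term. Hence $p_m(0)=1$. If $m$ is even, the first summand is $p_{m-1}$, with constant term $1$, while the second summand is divisible by $x^{J_{m-1}}$. Again $p_m(0)=1$. This gives the decomposition $p_m=1+r_m$.

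\textbf{Step 2: absorption.} Distributivity in the semiring $\Ring$ gives
\[
p_m p_u=(1+r_m)p_u=p_u+r_mp_u.
\]
By \Cref{p:Aabs}~\Cref{p:Aabs2} with $q=p_mp_u$, $p=p_u$ and $r=r_mp_u$, we conclude $p_u\abs p_mp_u$. The case $u=0$ is trivial, since $p_0=0$ is absorbed by anything. The argument works equally well for any $u\ge 0$.

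\textbf{Main obstacle.} The only delicate point is Step 1: confirming that the monomial factors $K_j$ carry positive $x$-degree exactly where needed. This amounts to $J_j\ge1$ for $j\ge1$. The degenerate factor $K_0=k$ has $x$-degree $0$, but it appears only in $p_2$, where it is multiplied by $p_0=0$.
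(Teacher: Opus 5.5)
Your proof is correct and follows essentially the same route as the paper. The paper computes $p_mp_u+p_u=(p_m+1)p_u=p_mp_u$, using $p_m+1=p_m$ because the constant term of $p_m$ is $1$; you instead split $p_m=1+r_m$ and invoke \Cref{p:Aabs}~\Cref{p:Aabs2}, and your inductive check of the constant term fills in the remark the paper makes just before the lemma.
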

\begin{proof} 
Since the constant term of $p_m$ equals $1$, it follows that
  \[
    p_m p_u + p_u = (p_m + 1)p_u = p_m p_u.
  \]
\end{proof}

In contrast, proving the following lemmas requires involved algebraic manipulation.
\begin{lemma}\label{l:xp}
  \begin{enumerate}
   \item \label{l:xp0} If $m\geq 2$, then $(1+x)p_m = p_m + K_m/k^4$. 
   \item \label{l:xp1} If $m\geq 3$, then $p_m + x p_{m-1} = p_m$.
  \end{enumerate}
\end{lemma}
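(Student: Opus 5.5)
The plan is to argue by strong induction on $m$, working coefficientwise in $E[[x]]$. The facts used throughout are these. Addition of $\Theta$-classes behaves like taking a maximum, so a monomial $c\,x^j$ is absorbed by a series exactly when the coefficient of $x^j$ in that series has order at least $c$. In particular, $M/k^4 \abs M$ for every monomial $M$. Also, since $p_u$ has constant term $1$ for $u\ge 1$, any product $M p_u$ absorbs the monomial $M$ itself.

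\emph{Part (1).} First check the base cases by hand. For $m=2$ we have $p_2=1$ and $K_2/k^4=x$. For $m=3$ we have $p_3=1+k^2x+k^4x^2$; here $x\abs k^2x$, $k^2x^2\abs k^4x^2$, and $k^4x^3=K_3/k^4$. For $m\ge 4$, multiply the defining recurrence by $(1+x)$ and apply the induction hypothesis to $p_{m-1}$ and $p_{m-2}$ (both indices are $\ge 2$). This gives
\[
(1+x)p_m = p_m + K_{m-2}^{2\od(m)}\frac{K_{m-1}}{k^4} + K_{m-1}^{\ev(m)}(1+K_{m-2})\frac{K_{m-2}}{k^4}.
\]
Next split by parity, using $K_m=K_{m-1}K_{m-2}^2$ from \Cref{eq:Ks}.
\begin{itemize}
\item If $m$ is odd, the extra terms are $K_m/k^4+(K_{m-2}+K_{m-2}^2)/k^4$. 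The monomial $K_{m-2}^2$ is absorbed by $K_{m-2}^2p_{m-1}$, and $K_{m-2}$ is absorbed by $K_{m-2}p_{m-2}$; both of these are summands of $p_m$, since $m-1,m-2\ge 1$.
\item If $m$ is even, the extra terms are $K_m/k^4 + (K_{m-1}+K_{m-1}K_{m-2})/k^4$. These leftover monomials are absorbed by the summand $K_{m-1}(1+K_{m-2})p_{m-2}$ of $p_m$.
\end{itemize}
In both cases the leftover terms are absorbed by $p_m$ (\Cref{p:Aabs}), and the identity $(1+x)p_m=p_m+K_m/k^4$ follows.

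\emph{Part (2).} Again use induction. Check $m=3$ directly ($x\abs k^2x$) and $m=4$ by expanding $p_4$. For the inductive step, expand $xp_{m-1}$ by its recurrence and match it term by term against the summands of $p_m$, using part (1) as the main tool.

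Consider $m$ odd. Then $xp_{m-1}=xp_{m-2}+xK_{m-2}(1+K_{m-3})p_{m-3}$. By part (1), $xp_{m-2}\abs p_{m-2}+K_{m-2}/k^4$, and this is absorbed by the summand $(1+K_{m-2})p_{m-2}$ of $p_m$. For the remaining term, factor out $K_{m-2}$. It then suffices to show $x(1+K_{m-3})p_{m-3}\abs p_{m-2}$, or at least that it is absorbed after multiplication by $K_{m-2}$ into $K_{m-2}p_{m-2}+K_{m-2}^2p_{m-1}$. For this, use part (1) on $p_{m-3}$ together with the induction hypothesis $xp_{m-3}\abs p_{m-2}$.

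The even case is symmetric. There $xp_{m-1}=xK_{m-3}^2p_{m-2}+x(1+K_{m-3})p_{m-3}$, and $p_m=p_{m-1}+K_{m-1}(1+K_{m-2})p_{m-2}$. Handle the first term with the identity $x^{\ev(m)}K_m=x^{\od(m)}K_{m-1}^2$ from \Cref{eq:Ks}, rewriting $xK_{m-3}^2$ in terms of $K_{m-2}$ so that it lands inside $K_{m-1}p_{m-2}$ or $p_{m-1}$.

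The main obstacle is this matching in part (2). The naive induction hypothesis ($xp_{m-1}\abs p_m$) may not be strong enough, because the cross terms such as $xK_{m-3}p_{m-3}$ require absorption statements one level further down. If that happens, strengthen the hypothesis to a joint statement proved simultaneously for each $m$: for instance, both $xp_{m-1}\abs p_m$ and $x(1+K_{m-2})p_{m-2}\abs p_m+K_m/k^4$, or a variant with the parity-dependent factors. Then run the parity case split, with part (1) supplying the $K/k^4$ corrections, which are always absorbed because of the $k^{-4}$ factor.
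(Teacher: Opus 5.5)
Your part (1) is correct and follows the paper's argument; you also write out the odd case, which the paper leaves to the reader.

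\textbf{Gap in part (2), odd case.} The problem is the cross term you flagged, $xK_{m-2}K_{m-3}p_{m-3}$. Your reduction to $x(1+K_{m-3})p_{m-3}\abs p_{m-2}$ is fine, and the induction hypothesis handles $xp_{m-3}$. But the tool you propose for $xK_{m-3}p_{m-3}$, namely part (1) applied to $p_{m-3}$, gives only $xK_{m-3}p_{m-3}\abs K_{m-3}p_{m-3}+K_{m-3}^2/k^4$. The bound $K_{m-3}p_{m-3}$ is too large. Already for $m=5$, $K_2p_2=k^4x$, while the coefficient of $x$ in $p_3$ is $k^2$. Your weaker fallback fails too: it would require $K_3K_2=k^{12}x^4$ to be absorbed by $K_3p_3+K_3^2p_4$ (or by $p_5$), whose $x^4$-coefficient is only $k^{10}$. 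So the odd case does not close as written. The proposal to ``strengthen the hypothesis'' is too vague to count as an argument, and the candidate you name concerns $p_{m-4}$ rather than $p_{m-3}$, so it does not have the shape you need.

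\textbf{Missing idea.} The needed fact is a purely monomial identity, and no strengthened hypothesis is required. Since $m-3$ is even, \eqref{eq:Ks} gives $xK_{m-3}=K_{m-4}^2$. Hence $xK_{m-3}p_{m-3}=K_{m-4}^2p_{m-3}$ is literally a summand of $p_{m-2}$, because $m-2$ is odd (\Cref{l:triv}). Together with the induction hypothesis $xp_{m-3}\abs p_{m-2}$, this gives $x(1+K_{m-3})p_{m-3}\abs p_{m-2}$. The cross term is then absorbed by $K_{m-2}p_{m-2}$, which is exactly what the paper does.

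\textbf{Even case.} Here you have the tools swapped. Rewriting gives $xK_{m-3}^2p_{m-2}=x^2K_{m-2}p_{m-2}$, but this does not place it inside $K_{m-1}p_{m-2}$ once $m\ge 6$: its lowest term lies in degree $J_{m-2}+2<J_{m-1}$. Its absorption into $p_{m-1}+K_{m-1}p_{m-2}$ actually comes from part (1). You also leave $x(1+K_{m-3})p_{m-3}$ untreated. The clean route is to apply part (1) to $p_{m-1}$ as a whole:
\[
p_m+xp_{m-1}=(1+x)p_{m-1}+K_{m-1}(1+K_{m-2})p_{m-2}=p_{m-1}+\frac{K_{m-1}}{k^4}+K_{m-1}(1+K_{m-2})p_{m-2}=p_m,
\]
since $K_{m-1}/k^4\abs K_{m-1}p_{m-2}$.
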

\begin{proof}
    \begin{enumerate}
    \item We proceed by induction on $m$. The base cases $m=2$ and $m=3$ follow from straightforward calculations. For $m\geq 4$, the inductive step proceeds as follows: If $m$ is even, the inductive definition of $p_m$, the induction hypothesis, and Eqs.~\eqref{eq:Ks} ensure that
\begin{align*}
&(1+x)p_m=\left(K_{m-1}p_{m-2}+\frac{K_{m-1}}{k^4}\right)\\
&\qquad\qquad\quad+\left(K_{m-1}K_{m-2}p_{m-2}+\frac{K_{m-1}K_{m-2}}{k^4}\right)+p_{m-1}+\frac{K_m}{k^4}\\
&= K_{m-1}p_{m-2}+K_{m-1}K_{m-2}p_{m-2}+p_{m-1}+\frac{K_m}{k^4}=p_m+\frac{K_m}{k^4},
\end{align*}   
   since the constant term of $p_{m-2}$ equals $1$. A similar argument applies when $m$ is odd.
 
\item Again, the proof proceeds by induction on $m$. The base case $m=3$ follows from straightforward calculations. Assume $m\geq 4$. If $m$ is even, the inductive definition of $p_m$ gives
  \begin{align*}
    p_m+&xp_{m-1}=(1+x)p_{m-1}+K_{m-1}(1+K_{m-2})p_{m-2},\\
                &= p_{m-1}+\left(\frac{K_{m-1}}{k^4}+K_{m-1}p_{m-2}\right)\\
                &\qquad\qquad\qquad\qquad +K_{m-1}K_{m-2}p_{m-2}, \quad \text{(by \Cref{l:xp} \Cref{l:xp0})}\\
    &= p_{m-1}+K_{m-1}p_{m-2}+K_{m-1}K_{m-2}p_{m-2}=p_m,
  \end{align*}
  since the constant term of $p_{m-1}$ equals $1$. Therefore, $p_m+x p_{m-1}= p_m$. If $m$ is odd, the recursive definitions of $p_m$ and $p_{m-1}$ yield
  \begin{align*}
    &p_{m}+xp_{m-1}=K_{m-2}^{2}p_{m-1}+K_{m-2}p_{m-2}
                  +(1+x)p_{m-2}\\&\qquad\qquad\qquad\qquad\qquad\qquad\qquad\qquad+xK_{m-2}(1+K_{m-3})p_{m-3}\\
    &= K_{m-2}^{2}p_{m-1}+K_{m-2}p_{m-2}
                   +p_{m-2}\\&\qquad+\frac{K_{m-2}}{k^4}+xK_{m-2}(1+K_{m-3})p_{m-3}. \quad \text{(by \Cref{l:xp} \Cref{l:xp0})}
  \end{align*} 
Since $K_{m-2}p_{m-2}+K_{m-2}/k^4= K_{m-2}p_{m-2}$ (because the constant term of $p_{m-2}$ equals $1$), it follows that
\begin{align*}
p_m+xp_{m-1}= K_{m-2}^2p_{m-1}+p_{m-2}+&K_{m-2}p_{m-2}\\&+xK_{m-2}(1+K_{m-3})p_{m-3},
\end{align*}
where $xK_{m-2}(1+K_{m-3})p_{m-3}$ is absorbed by $K_{m-2}p_{m-2}$ because
\begin{align*}
p_{m-2}+x(1+K_{m-3})p_{m-3}&= (p_{m-2}+xp_{m-3})+K_{m-4}^2p_{m-3}\\
&= p_{m-2}+K_{m-4}^2p_{m-3}= p_{m-2},
\end{align*}
by applying Eqs.~\eqref{eq:Ks} (which implies $xK_{m-3}=K_{m-4}^2$), the induction hypothesis and \Cref{l:triv}. Therefore, $p_m+xp_{m-1}= p_m$.
\end{enumerate}
\end{proof}
  
This lemma and the subsequent results in this section establish identities within the dioid $\DiZ$, serving as intermediate steps toward proving that the deformed Jacobsthal polynomials exhibit a form of idempotency.
\begin{lemma}\label{l:pKs}  
   If $m \geq 2$ is even, then
    \[
      p_m+K_mp_m+K_{m-1}^2p_m= p_m+K_mp_m+\frac{K_m^2}{k^4}.
    \] 
\end{lemma}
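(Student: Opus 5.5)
The identity follows from the second relation in Eqs.~\eqref{eq:Ks} together with \Cref{l:xp}~\Cref{l:xp0}. The idea is to rewrite the extra term $K_{m-1}^2p_m$ as an $x$-multiple of $K_mp_m$. It then merges with $K_mp_m$ into a single factor $(1+x)p_m$, which \Cref{l:xp}~\Cref{l:xp0} already simplifies.

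First, since $m$ is even, we have $\ev(m)=1$ and $\od(m)=0$. The second identity in Eqs.~\eqref{eq:Ks} therefore reads $xK_m=K_{m-1}^2$. Substituting this gives
\[
K_mp_m+K_{m-1}^2p_m=K_mp_m+xK_mp_m=K_m(1+x)p_m,
\]
using only distributivity in the semiring $\Ring$.

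Second, because $m\geq 2$, \Cref{l:xp}~\Cref{l:xp0} applies and gives $(1+x)p_m=p_m+K_m/k^4$. Hence
\[
K_m(1+x)p_m=K_mp_m+\frac{K_m^2}{k^4}.
\]
Adding $p_m$ to both sides yields exactly the claimed identity
\[
p_m+K_mp_m+K_{m-1}^2p_m=p_m+K_mp_m+\frac{K_m^2}{k^4}.
\]

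There is no real obstacle here. The only point needing care is the parity bookkeeping: for even $m$ the relation in Eqs.~\eqref{eq:Ks} places the factor $x$ on $K_m$ rather than on $K_{m-1}^2$, which is what allows the two terms to combine into $(1+x)p_m$.
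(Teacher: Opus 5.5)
Your proposal is correct and follows essentially the same route as the paper: use $K_{m-1}^2 = xK_m$ for even $m$, factor out $K_m(1+x)p_m$, and then apply \Cref{l:xp}~\Cref{l:xp0}. Your parity check of the second identity in Eqs.~\eqref{eq:Ks} is also correct.
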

\begin{proof}
  From Eqs.~\eqref{eq:Ks}, $K_{m-1}^2=xK_m$. Thus, \Cref{l:xp} \Cref{l:xp0} leads to
  \begin{align*}
    p_m+K_mp_m+K_{m-1}^2p_m&=p_m+K_mp_m(1+x)= p_m+K_m\left(p_m+\frac{K_m}{k^4}\right).
  \end{align*}
\end{proof}

The following lemma provides key technical relations regarding the scaling behavior and index shifts of the parameters $K_m$.
\begin{lemma}\label{l:Kmk4}
  \begin{enumerate}
  \item\label{l:Kmk4i} If $m \geq 4$ is even, then
  \[
    p_m + K_{m-1} p_{m-1} = p_m + \frac{K_m}{k^4}.
  \]
  \item\label{l:Kmk4ii} If $m \geq 5$ is odd, then
  \[
    p_m + K_{m-2}^3 p_{m-2} = p_m + \frac{K_m}{k^4}.
  \]
  \end{enumerate}
\end{lemma}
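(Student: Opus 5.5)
The plan is to prove both parts by direct expansion of the recurrence. Each unwanted term will be rewritten with Eqs.~\eqref{eq:Ks} and Lemma~\ref{l:xp}~\eqref{l:xp0}, and absorbed using \Cref{p:Aabs}, Lemma~\ref{l:triv} and Lemma~\ref{l:red}. If a residual term resists, I would switch to a joint induction on $m$ covering \eqref{l:Kmk4i} and \eqref{l:Kmk4ii}, each part at $m$ drawing on the other at smaller indices. The small cases $m=4$ and $m=5$ (and possibly $m=6,7$, if the induction reaches back three steps) would be checked by hand.

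For \eqref{l:Kmk4i}, with $m$ even, we have $p_m=p_{m-1}+K_{m-1}(1+K_{m-2})p_{m-2}$. Since $m-1$ is odd, I expand $p_{m-1}=K_{m-3}^2p_{m-2}+(1+K_{m-3})p_{m-3}$, which splits $K_{m-1}p_{m-1}$ into two pieces.

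The first piece is $K_{m-1}K_{m-3}^2p_{m-2}$. Since $m-2$ is even, Eqs.~\eqref{eq:Ks} give $K_{m-3}^2=xK_{m-2}$, so this piece equals $xK_{m-1}K_{m-2}p_{m-2}$. Adding it to the summand $K_{m-1}K_{m-2}p_{m-2}$ already inside $p_m$ and applying Lemma~\ref{l:xp}~\eqref{l:xp0} gives
\[
K_{m-1}K_{m-2}(1+x)p_{m-2}=K_{m-1}K_{m-2}p_{m-2}+\frac{K_{m-1}K_{m-2}^2}{k^4}.
\]
By Eqs.~\eqref{eq:Ks} the last term is exactly $K_m/k^4$, which is the source of the correction term in the statement.

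The second piece is $K_{m-1}(1+K_{m-3})p_{m-3}$, and it must be shown to be absorbed by $p_m+K_m/k^4$. The part $K_{m-1}p_{m-3}$ is harmless: $p_{m-3}\abs p_{m-2}$ because $p_{m-2}=p_{m-3}+\cdots$, so it is absorbed by $K_{m-1}p_{m-2}$.

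The main obstacle is $K_{m-1}K_{m-3}p_{m-3}$. Monomials with different powers of $x$ never absorb each other, so this term cannot be dominated by a crude degree comparison. It must be matched against a term of $K_{m-1}p_{m-2}$ or $K_{m-1}K_{m-2}p_{m-2}$ carrying the same $x$-power and a larger $k$-power. My first attempt is to factor out $K_{m-1}$ and reduce to showing $K_{m-3}p_{m-3}\abs p_{m-2}+K_{m-2}p_{m-2}$ (up to terms of the form $K/k^4$ that are themselves absorbed). For this I would expand $p_{m-2}$ once more and use the auxiliary identity $p_{m-2}+x(1+K_{m-3})p_{m-3}=p_{m-2}$ from the proof of Lemma~\ref{l:xp}~\eqref{l:xp1}, together with $xK_{m-3}=K_{m-4}^2$. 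If that fails, I would invoke part \eqref{l:Kmk4ii} at the odd index $m-3$, or Lemma~\ref{l:pKs} at the even index $m-2$, inside the induction.

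For \eqref{l:Kmk4ii}, with $m$ odd, $p_m=K_{m-2}^2p_{m-1}+(1+K_{m-2})p_{m-2}$. I expand the even-index term $p_{m-1}=p_{m-2}+K_{m-2}(1+K_{m-3})p_{m-3}$, so that $K_{m-2}^2p_{m-2}$ appears inside $p_m$. Then I compare $K_{m-2}^3p_{m-2}$ with this term. Because $m-2$ is odd, Eqs.~\eqref{eq:Ks} give $K_{m-3}^2=K_{m-2}$, and $K_m=K_{m-1}K_{m-2}^2$ with $K_{m-1}=xK_{m-2}^2$. Using these, Lemma~\ref{l:xp}~\eqref{l:xp0} applied to the appropriate multiple of $p_{m-2}$ should again produce exactly the extra term $K_m/k^4$. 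The leftover summands would be absorbed by the same pattern as in \eqref{l:Kmk4i}, using the case already established at a smaller index.
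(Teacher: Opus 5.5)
You correctly reduce (i) to showing $K_{m-3}p_{m-3}\abs p_{m-2}+K_{m-2}p_{m-2}$, but that step is never proved. The tools you propose for it fail because of parity, and the one thing that works, (i) at index $m-2$, is not among your options. Your decomposition of (i) is otherwise the paper's; the paper just proves the identity before multiplying by $K_{m-1}$. The piece $K_{m-1}K_{m-3}^2p_{m-2}=xK_{m-1}K_{m-2}p_{m-2}$ does produce $K_m/k^4$ via Lemma~\ref{l:xp}~\eqref{l:xp0}, and $K_{m-1}p_{m-3}$ is absorbed because $p_{m-3}\abs p_{m-2}$.

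Here $m$ is even, so $m-2$ is even and $m-3$ is odd. Hence $xK_{m-3}=K_{m-4}^2$ is false: Eqs.~\eqref{eq:Ks} give $K_{m-3}=xK_{m-4}^2$, and for example $xK_3=k^8x^4\neq k^8x^2=K_2^2$. The identity $p_{m-2}+x(1+K_{m-3})p_{m-3}=p_{m-2}$ from the proof of Lemma~\ref{l:xp}~\eqref{l:xp1} holds only when $m-2$ is odd. For $m=6$, $x(1+K_3)p_3$ contains $k^{10}x^5$ and $k^{12}x^6$, beyond $\deg p_4=4$. Your fallbacks, (ii) at $m-3$ and Lemma~\ref{l:pKs} at $m-2$, do not involve $K_{m-3}p_{m-3}$ at all.

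The missing idea is to induct on (i) itself in steps of two. Statement (i) at the even index $m-2\geq 4$ says $p_{m-2}+K_{m-3}p_{m-3}=p_{m-2}+K_{m-2}/k^4$. Also $K_{m-2}/k^4\abs K_{m-2}p_{m-2}$, because $p_{m-2}$ has constant term $1$. Together these give exactly your reduced target. Multiplying by $K_{m-1}$ then places $K_{m-1}K_{m-3}p_{m-3}$ under $K_{m-1}(1+K_{m-2})p_{m-2}\abs p_m$. With the hand check at $m=4$, this proves (i).

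For (ii) there is no argument yet. Both relations you quote have the power of $x$ on the wrong side: for $m-2$ odd, $K_{m-2}=xK_{m-3}^2$, and for $m-1$ even, $xK_{m-1}=K_{m-2}^2$. Phrases like ``should again produce'' and ``the same pattern'' leave the absorption unverified. No further expansion is needed. Write
\[
p_m+K_{m-2}^3p_{m-2}=K_{m-2}^2\bigl(p_{m-1}+K_{m-2}p_{m-2}\bigr)+(1+K_{m-2})p_{m-2}.
\]
Replace the bracket by $p_{m-1}+K_{m-1}/k^4$ using (i) at the even index $m-1\geq 4$, and use $K_{m-2}^2K_{m-1}=K_m$. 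So (ii) is a one-line consequence of (i), and the induction you need runs on (i) alone, not jointly with (ii).
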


\begin{proof}
  \begin{enumerate}
  \item 
  The base case $m=4$ follows from straightforward calculations. For $m\geq 6$, the recursive definition of $p_{m-1}$ allows us to write
  \begin{align*}
     &p_{m-2}+K_{m-2}p_{m-2}+p_{m-1} = \left(p_{m-2}+K_{m-2}p_{m-2}+K_{m-3}^2p_{m-2}\right)\\
                                   &\qquad +\left(p_{m-2}+p_{m-3}\right)+\left(p_{m-2}+K_{m-3}p_{m-3}\right)+K_{m-2}p_{m-2}\\
                                   &= \left(p_{m-2}+K_{m-2}p_{m-2}+\frac{K_{m-2}^2}{k^4}\right)+p_{m-2} \quad \text{(by \Cref{l:pKs,l:triv})}\\
                                   &\qquad +p_{m-2}+\frac{K_{m-2}}{k^4}+K_{m-2}p_{m-2} \quad \text{(by the induction hypothesis).}
  \end{align*}
Now, since the constant term of $p_{m-2}$ equals $1$, it follows that $K_{m-2}/k^4 \abs K_{m-2} p_{m-2}$. Therefore,
\[
   p_{m-2} + K_{m-2} p_{m-2} + p_{m-1} = p_{m-2} + K_{m-2} p_{m-2} + \frac{K_{m-2}^2}{k^4}.
\]
Multiplying both sides by $K_{m-1}$ we obtain
\begin{align*}
  K_{m-1} p_{m-2} + K_{m-1} &K_{m-2} p_{m-2} + K_{m-1} p_{m-1} \\
   &= K_{m-1} p_{m-2} + K_{m-1} K_{m-2} p_{m-2} + \frac{K_{m}}{k^4}.  
\end{align*}
Adding $p_{m-1}$ to both sides leads to the result.

\item The recursive definition of $p_m$ gives
  \begin{align*}
    p_m + K_{m-2}^3 p_{m-2}
      &= K_{m-2}^2 (p_{m-1} + K_{m-2} p_{m-2}) + (1 + K_{m-2}) p_{m-2} \\
      &= K_{m-2}^2 \left(p_{m-1} + \frac{K_{m-1}}{k^4}\right) + (1 + K_{m-2}) p_{m-2} \\
      &= p_m + \frac{K_m}{k^4},
  \end{align*}
  by \Cref{l:Kmk4} \Cref{l:Kmk4i} and the identity $K_m = K_{m-1} K_{m-2}^2$.
  \end{enumerate}
\end{proof}

The following lemma establishes some parity-dependent absorption relations between consecutive deformed Jacobsthal polynomials.

\begin{lemma}\label{l:pm-1Apm}
Let $m\geq 1$. If $m$ is odd, then $p_m$ absorbs $x(1+K_{m-1})p_{m-1}$; if $m$ is even, then $(1+K_{m-1})p_{m-1}$ absorbs $xp_m$.
\end{lemma}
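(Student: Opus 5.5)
The plan is to prove the odd case first by unfolding the recursive definition of $p_m$ once. The even case should then follow from the odd case applied at index $m-1$. Throughout, the identities \eqref{eq:Ks} turn the factor $x$ into a shift of the $K$'s.

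\emph{Odd $m$.} If $m=1$, then $p_0=0$ and there is nothing to prove, so let $m\geq 3$. Since $\ev(m)=0$ and $\od(m)=1$, the definition reads
\[
p_m=K_{m-2}^2p_{m-1}+(1+K_{m-2})p_{m-2}.
\]
Split $x(1+K_{m-1})p_{m-1}=xp_{m-1}+xK_{m-1}p_{m-1}$ and treat the two terms separately.
\begin{itemize}
\item The term $xp_{m-1}$ is absorbed by $p_m$ by \Cref{l:xp} \Cref{l:xp1}.
\item For the second term, apply \eqref{eq:Ks} at the even index $m-1$. This gives $xK_{m-1}=K_{m-2}^2$, so $xK_{m-1}p_{m-1}=K_{m-2}^2p_{m-1}$, which is absorbed by $p_m$ by \Cref{l:triv}.
\end{itemize}
\Cref{p:Aabs} \Cref{p:Aabs3} then combines the two absorptions.

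\emph{Even $m$.} The case $m=2$ is a direct check: $p_2=1$, and $1+kx$ absorbs $x$ because $1+k\in\Theta(k)$ in $E$. For $m\geq 4$, the definition gives
\[
p_m=p_{m-1}+K_{m-1}(1+K_{m-2})p_{m-2},
\]
so
\[
xp_m=xp_{m-1}+K_{m-1}\cdot x(1+K_{m-2})p_{m-2}.
\]
Again treat the two terms separately.
\begin{itemize}
\item For the first term, \Cref{l:xp} \Cref{l:xp0} applied at $m-1\geq 2$ gives $p_{m-1}+xp_{m-1}=p_{m-1}+K_{m-1}/k^4$. Since the constant term of $p_{m-1}$ is $1$, we have $K_{m-1}/k^4\abs K_{m-1}p_{m-1}$. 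Hence $xp_{m-1}\abs p_{m-1}+K_{m-1}p_{m-1}=(1+K_{m-1})p_{m-1}$.
\item For the second term, $m-1$ is odd, so the odd case already proved gives $x(1+K_{m-2})p_{m-2}\abs p_{m-1}$. Multiplying by $K_{m-1}$ (\Cref{p:Aabs} \Cref{p:Aabs1}) yields absorption by $K_{m-1}p_{m-1}$, and therefore by $(1+K_{m-1})p_{m-1}$ via \Cref{p:Aabs} \Cref{p:Aabs4}.
\end{itemize}
Combining both terms with \Cref{p:Aabs} \Cref{p:Aabs3} finishes the even case.

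The only delicate point is the bookkeeping. One has to apply \eqref{eq:Ks} with the correct parity, so that $xK_{m-1}=K_{m-2}^2$ exactly when $m-1$ is even. One also has to check that the lower index limits of \Cref{l:xp} ($m\geq 3$ for part \Cref{l:xp1} and $m-1\geq 2$ for part \Cref{l:xp0}) are met, which forces the small cases $m=1$ and $m=2$ to be handled separately as above. No induction beyond the reuse of the odd case seems necessary.
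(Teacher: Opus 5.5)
Your proof is correct. Your even case coincides with the paper's: you split $xp_m$ via the recursion, absorb $xp_{m-1}$ using \Cref{l:xp} \Cref{l:xp0} together with the constant term of $p_{m-1}$, and absorb $K_{m-1}x(1+K_{m-2})p_{m-2}$ via the odd case at $m-1$.

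The difference is in the odd case. The paper runs a simultaneous induction over both parities. It absorbs $xp_{m-1}$ by calling the even case at $m-1$, namely $xp_{m-1}\abs(1+K_{m-2})p_{m-2}$, so $xp_{m-1}$ disappears into one summand of $p_m$. Meanwhile $xK_{m-1}p_{m-1}=K_{m-2}^2p_{m-1}$ is literally the other summand. You instead take $xp_{m-1}\abs p_m$ directly from \Cref{l:xp} \Cref{l:xp1}. This makes the odd case a two-line consequence of earlier lemmas and removes the induction entirely, since the even case only needs the odd case one index down.

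This is legitimate, because \Cref{l:xp} is proved beforehand without using the present lemma. In fact, the odd-case step in the proof of \Cref{l:xp} \Cref{l:xp1} already derives $p_{m-2}+x(1+K_{m-3})p_{m-3}=p_{m-2}$ by exactly your argument, so your route reuses that work. The paper's proof of this lemma, by contrast, uses only \Cref{l:xp} \Cref{l:xp0} from that lemma.

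One small slip: $K_1=k^2x$, not $kx$. Your $m=2$ check should therefore say that $1+k^2x$ absorbs $x$ because $1+k^2\in\Theta(k^2)$; the conclusion is unaffected.
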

\begin{proof}
The proof proceeds by induction on $m$. If $m=1$ or $m=2$, the result is trivial. Assume $m\geq 3$.

\textit{Case $m$ is odd:} The induction hypothesis ensures that $xp_{m-1}\abs (1+K_{m-2})p_{m-2}$. Thus, using the recursive definition of $p_m$, we find
\begin{align*}
p_m+x(1+K_{m-1})p_{m-1}&=K_{m-2}^2p_{m-1}+\big((1+K_{m-2})p_{m-2}+xp_{m-1}\big)\\
&\qquad\qquad\qquad\qquad\qquad\qquad\qquad\qquad+\underbrace{xK_{m-1}}_{K_{m-2}^2}p_{m-1}\\
&= K_{m-2}^2p_{m-1}+(1+K_{m-2})p_{m-2}=p_m.
\end{align*}

\textit{Case $m$ is even:} From the induction hypothesis, $xK_{m-1}(1+K_{m-2})p_{m-2}\abs K_{m-1}p_{m-1}$. Furthermore, the recursive definition of $p_m$ yields
\begin{align*}
xp_m+(1+K_{m-1})p_{m-1}&=xp_{m-1}+\big(xK_{m-1}(1+K_{m-2})p_{m-2}+K_{m-1}p_{m-1}\big)\\
&\qquad\qquad\qquad\qquad\qquad\qquad\qquad\qquad\qquad\qquad+p_{m-1}\\
&= xp_{m-1}+K_{m-1}p_{m-1}+p_{m-1}\\
&= p_{m-1}+\frac{K_{m-1}}{k^4}+K_{m-1}p_{m-1} \quad \text{(by \Cref{l:xp} \Cref{l:xp0})}\\
&= p_{m-1}+K_{m-1}p_{m-1}.
\end{align*}
\end{proof}

The following lemma establishes an additional absorption relation for odd indices, involving a scaled sum of consecutive polynomials.

\begin{lemma}\label{l:pm-1Km-1}
  If $m\geq 1$ is odd, then $p_{m-1}+K_{m-1}p_m$ absorbs  $p_m$.
\end{lemma}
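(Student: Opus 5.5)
We argue by induction on odd $m$, and show directly that each summand of $p_m$ is absorbed by $q_m:=p_{m-1}+K_{m-1}p_m$.

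\emph{Base case.} For $m=1$ we have $p_0=0$ and $K_0=k$. The claim therefore reads $1\abs k$. This holds in $E$, since $k+1\in\Theta(k)$.

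\emph{Inductive step, setup.} Let $m\ge 3$ be odd. By definition,
\[
p_m=K_{m-2}^2p_{m-1}+p_{m-2}+K_{m-2}p_{m-2},
\]
and by \Cref{p:Aabs} \Cref{p:Aabs3} it suffices to show that each of the three summands is absorbed by $q_m$.

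\emph{First summand.} Since $m-1$ is even, Eqs.~\eqref{eq:Ks} give $K_{m-2}^2=xK_{m-1}$, so $K_{m-2}^2p_{m-1}=K_{m-1}\,xp_{m-1}$. By \Cref{l:pm-1Apm} ($m$ odd), $xp_{m-1}\abs x(1+K_{m-1})p_{m-1}\abs p_m$. Multiplying by $K_{m-1}$ gives $K_{m-2}^2p_{m-1}\abs K_{m-1}p_m\abs q_m$.

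\emph{Second summand.} Because $m-1$ is even, $\od(m-1)=0$, and \Cref{l:triv} gives $p_{m-2}\abs p_{m-1}\abs q_m$.

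\emph{Third summand: a preliminary fact.} The term $K_{m-2}p_{m-2}$ is the main obstacle, and it is where the induction hypothesis is used. Expanding the recurrence for the even index $m-1$,
\[
p_{m-1}=p_{m-2}+K_{m-2}p_{m-3}+K_{m-2}K_{m-3}p_{m-3},
\]
so $p_{m-1}$ absorbs both $K_{m-2}p_{m-3}$ and $K_{m-2}K_{m-3}p_{m-3}$. Also, $p_m$ absorbs $p_{m-2}$ because $p_{m-2}$ is one of its summands. Hence $K_{m-1}p_m$ absorbs $K_{m-1}p_{m-2}=K_{m-2}K_{m-3}^2p_{m-2}$, using $K_{m-1}=K_{m-2}K_{m-3}^2$.

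\emph{Third summand: two applications of the hypothesis.} The induction hypothesis for the odd index $m-2$ states $p_{m-2}\abs p_{m-3}+K_{m-3}p_{m-2}$. Multiplying by $K_{m-2}$ yields
\[
K_{m-2}p_{m-2}\abs K_{m-2}p_{m-3}+K_{m-2}K_{m-3}p_{m-2}.
\]
The first term on the right is absorbed by $p_{m-1}$. For the second, apply the same hypothesis again, now multiplied by $K_{m-2}K_{m-3}$:
\[
K_{m-2}K_{m-3}p_{m-2}\abs K_{m-2}K_{m-3}p_{m-3}+K_{m-2}K_{m-3}^2p_{m-2}.
\]
Here the first term is absorbed by $p_{m-1}$, and the second equals $K_{m-1}p_{m-2}$, which is absorbed by $K_{m-1}p_m$.

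\emph{Conclusion.} Since $\abs$ is a partial order (transitivity follows from the definition), chaining these absorptions with \Cref{p:Aabs} \Cref{p:Aabs1a}, \Cref{p:Aabs3} gives $K_{m-2}p_{m-2}\abs q_m$. Together with the first two summands, this shows $p_m\abs p_{m-1}+K_{m-1}p_m$, completing the induction.
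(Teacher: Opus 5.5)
Your proof is correct, but it takes a different route from the paper's.

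\textbf{The paper's route.} It does not induct on $m$. It expands $p_m+p_{m-1}+K_{m-1}p_m$ with the recurrence. It then discards $xK_{m-1}p_{m-1}$ and $p_{m-2}$ using \Cref{l:pm-1Apm} at the even index $m-1$ together with \Cref{l:triv}, and regroups what remains as $K_{m-1}p_m+p_{m-1}+K_{m-2}p_{m-2}$. The difficult term $K_{m-2}p_{m-2}$ is removed in one step by the sharp identity of \Cref{l:Kmk4} \Cref{l:Kmk4i}, $p_{m-1}+K_{m-2}p_{m-2}=p_{m-1}+K_{m-1}/k^4$. The leftover $K_{m-1}/k^4$ is absorbed by $K_{m-1}p_m$ because $p_m$ has constant term $1$.

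\textbf{Your route.} You check the three summands of $p_m$ separately. You absorb $xK_{m-1}p_{m-1}$ into $K_{m-1}p_m$ via \Cref{l:pm-1Apm} at the odd index $m$. You handle $K_{m-2}p_{m-2}$ by applying the statement itself at $m-2$ twice, scaled by $K_{m-2}$ and then by $K_{m-2}K_{m-3}$, together with $K_{m-1}=K_{m-2}K_{m-3}^2$. In effect you prove only the weaker absorption $K_{m-2}p_{m-2}\abs p_{m-1}+K_{m-1}p_m$, which is all that is needed.

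\textbf{What each buys.} The paper's version is shorter, since \Cref{l:Kmk4} is needed elsewhere anyway. Yours is self-contained, avoids the $k^{-4}$ bookkeeping, and covers $m=3$ uniformly from the base case $m=1$. That last point is a real advantage. \Cref{l:Kmk4} \Cref{l:Kmk4i} is stated only for even indices $\ge 4$, and the identity is false at index $2$: there $p_2+K_1p_1=1+k^2x$, while $p_2+K_2/k^4=1+x$. So the paper's argument tacitly needs a separate direct check at $m=3$, which your induction does not.
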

\begin{proof}
If $m = 1$, the result is obvious. If $m \geq 3$, then from the recursive definition of $p_m$ and the relation $K_{m-2}^2 = x K_{m-1}$, we obtain
  \begin{align*}
    p_m + p_{m-1} + K_{m-1}& p_m
       = K_{m-1} \big(x p_{m-1} + (1 + K_{m-2}) p_{m-2}\big) + p_{m-1} + p_{m-2} \\
      &\qquad + K_{m-2} p_{m-2} + K_m p_{m-1} \\
      &= K_{m-1}(1 + K_{m-2}) p_{m-2} + p_{m-1} \\
      &\qquad + K_{m-2} p_{m-2} + K_m p_{m-1} \quad  {\text{(by \Cref{l:pm-1Apm,l:triv})}} \\
      &= K_{m-1} p_m + p_{m-1} + K_{m-2} p_{m-2} \\
      &= K_{m-1} p_m + p_{m-1} + \frac{K_{m-1}}{k^4} \quad {\text{(by \Cref{l:Kmk4} \Cref{l:Kmk4i})}} \\
      &= K_{m-1} p_m + p_{m-1}.
  \end{align*}
\end{proof}

The next result provides a simplification for mixed products of two deformed Jacobsthal polynomials with consecutive indices.

\begin{lemma}\label{l:UNewA}
\begin{enumerate}
\item\label{l:UNewA1} If $m\geq 2$ is even and $p_{m-1}p_{m-2}\abs p_{m-2}+K_{m-2}p_{m-1}$, then 
\[
K_{m-1}(1+K_{m-2})p_{m-1}p_{m-2}\abs p_m+K_m p_{m-1}.
\]
\item\label{l:UNewA2} If $m\geq 2$ is odd and $p_{m-1}p_{m-2}\abs p_{m-1}+K_{m-1}p_{m-2}$, then
  \[
    (1+K_{m-2})p_{m-1}p_{m-2}\abs p_{m-1}+K_{m-1}p_m.
  \]
\end{enumerate}
\end{lemma}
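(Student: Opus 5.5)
Both parts will follow the same route: multiply the hypothesis by the appropriate monomial factor using \Cref{p:Aabs} \Cref{p:Aabs1}, and then show that the resulting right-hand side is absorbed by the target using the recursive definition of $p_m$ together with the earlier lemmas.

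\emph{Part \Cref{l:UNewA1} ($m$ even).} The hypothesis gives
\[
K_{m-1}(1+K_{m-2})p_{m-1}p_{m-2}\abs K_{m-1}(1+K_{m-2})p_{m-2}+K_{m-1}K_{m-2}(1+K_{m-2})p_{m-1}.
\]
It then suffices to show that each of the three resulting terms is absorbed by $p_m+K_mp_{m-1}$.
\begin{itemize}
\item The term $K_{m-1}(1+K_{m-2})p_{m-2}$ is absorbed by $p_m$, since it is a summand of the recursion for even $m$ (\Cref{p:Aabs} \Cref{p:Aabs2}).
\item The term $K_{m-1}K_{m-2}^2p_{m-1}$ equals $K_mp_{m-1}$ by Eqs.~\eqref{eq:Ks}.
\item The remaining term is $K_{m-1}K_{m-2}p_{m-1}$. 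Here I would use $K_{m-2}\abs 1+K_{m-2}^2$ in $E$, because $k^{a}\le k^{0}+k^{2a}$ at the level of powers of $k$; the $x$-powers need checking, and if they do not match, the fallback is $K_{m-2}\abs K_{m-2}(1+K_{m-2})$ combined with \Cref{l:pm-1Apm}.
\end{itemize}
Before committing to the bound on the cross term, I would test it concretely: multiply it by $K_{m-1}p_{m-1}$, so that it becomes $K_{m-1}p_{m-1}+K_mp_{m-1}$, and then absorb $K_{m-1}p_{m-1}$. By \Cref{l:Kmk4} \Cref{l:Kmk4i}, $K_{m-1}p_{m-1}$ is absorbed by $p_m+K_m/k^4$, and $K_m/k^4\abs K_mp_{m-1}$ because $p_{m-1}$ has constant term $1$. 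This handles the case $m\ge4$; the small cases $m=2$ (where $p_0=0$ makes the claim trivial) and $m=4$, if needed, I would check by hand.

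\emph{Part \Cref{l:UNewA2} ($m$ odd).} The hypothesis gives
\[
(1+K_{m-2})p_{m-1}p_{m-2}\abs (1+K_{m-2})p_{m-1}+K_{m-1}(1+K_{m-2})p_{m-2}.
\]
I then need each of these terms to be absorbed by $p_{m-1}+K_{m-1}p_m$.
\begin{itemize}
\item The term $p_{m-1}$ is immediate.
\item For $K_{m-1}(1+K_{m-2})p_{m-2}$: the recursion gives $(1+K_{m-2})p_{m-2}\abs p_m$ for odd $m$, so multiplying by $K_{m-1}$ puts it under $K_{m-1}p_m$.
\item For $K_{m-2}p_{m-1}$, I would write $K_{m-2}=K_{m-1}K_{m-2}/K_{m-1}$. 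The natural route is a weighted version of the AM--GM absorption $K_{m-2}\abs 1+K_{m-2}^2$, followed by $K_{m-2}^2p_{m-1}\abs p_m$ from \Cref{l:triv}. This shows the term is absorbed by $p_{m-1}+p_m$, and $p_m$ is in turn absorbed by $p_{m-1}+K_{m-1}p_m$ by \Cref{l:pm-1Km-1}.
\end{itemize}
The subtle point is the $x$-degree. Since $x$ is a formal variable in $E[[x]]$, absorption is coefficientwise, so $K_{m-2}\abs 1+K_{m-2}^2$ is false as stated: different powers of $x$ never absorb each other.

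This is the main obstacle: the cross terms $K_{m-2}p_{m-1}$ and $K_{m-1}K_{m-2}p_{m-1}$ cannot be handled by naive $k$-degree comparison. My first attempt would be to prove $K_{m-2}p_{m-1}\abs p_{m-1}+K_{m-2}^2p_{m-1}$ coefficientwise, using that $p_{m-1}$ has a range of $x$-powers with intermediate $k$-growth. Equivalently, by induction one would show the auxiliary claim $K_{m-2}p_{m-1}\abs p_m+p_{m-1}$. That claim follows from \Cref{l:xp}, \Cref{l:Kmk4} and \Cref{l:pm-1Apm} after unfolding one step of the recursion for $p_{m-1}$: $K_{m-2}p_{m-1}$ splits into $K_{m-2}K_{m-3}^{2\od(m-1)}p_{m-2}$ plus $K_{m-2}$ times the remaining terms, and each piece matches, via Eqs.~\eqref{eq:Ks}, a summand of $p_m$ or one of the earlier absorption relations.
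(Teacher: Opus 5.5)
\textbf{There is a genuine gap, and it is at the cross terms you flagged.} You are right that $K_{m-2}\abs 1+K_{m-2}^2$ fails in $\DiZ$, but the replacement bounds you propose are also false.
\begin{itemize}
\item Your Part 1 test bound $K_{m-1}K_{m-2}p_{m-1}\abs K_{m-1}p_{m-1}+K_mp_{m-1}$ already fails at $m=4$. The left side contains $K_3K_2=k^{12}x^4$, whereas $K_3p_3+K_4p_3$ has $x^4$-coefficient $k^{10}$.
\item For Part 2, both auxiliary claims fail at $m=5$: $K_{m-2}p_{m-1}\abs p_{m-1}+K_{m-2}^2p_{m-1}$ and $K_{m-2}p_{m-1}\abs p_m+p_{m-1}$. (You call them equivalent; they are not.) Indeed $K_3p_4$ contains $K_3\cdot K_3K_2=k^{20}x^7$, while $p_4+K_3^2p_4$ and $p_4+p_5$ both have $x^7$-coefficient $k^{18}$. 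The first of these also has zero $x^5$-coefficient, against $k^{12}x^5$ in $K_3p_4$.
\end{itemize}
The underlying error is that you discard the $K$-weighted part of the target ($K_mp_{m-1}$, resp.\ $K_{m-1}p_m$) before bounding the cross term. That is exactly the part needed to absorb its top-degree pieces.

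\textbf{Part 1, missing step.} Apply \Cref{l:pm-1Km-1} at the odd index $m-1$ to get $p_{m-1}\abs p_{m-2}+K_{m-2}p_{m-1}$. Multiplying by $K_{m-1}K_{m-2}$ gives $K_{m-1}K_{m-2}p_{m-1}\abs K_{m-1}K_{m-2}p_{m-2}+K_mp_{m-1}$. The term $K_{m-1}K_{m-2}p_{m-2}$ is a summand of $p_m$. This is what the paper does: it uses that lemma to rewrite $(1+K_{m-2})(p_{m-2}+K_{m-2}p_{m-1})$ as $p_{m-2}+K_{m-2}p_{m-2}+K_{m-2}^2p_{m-1}$, then multiplies by $K_{m-1}$ and adds $p_{m-1}$.

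\textbf{Part 2, missing step.} The correct intermediate target is $p_{m-1}+K_{m-1}p_{m-2}$. Unfold $p_{m-1}=p_{m-2}+K_{m-2}(1+K_{m-3})p_{m-3}$ and treat the two pieces of $K_{m-2}p_{m-1}$ separately.
\begin{itemize}
\item The piece $K_{m-2}p_{m-2}$ is absorbed by $p_{m-1}+K_{m-1}/k^4$, by \Cref{l:Kmk4} \Cref{l:Kmk4i} at $m-1$ (for $m\ge5$; $m=3$ is a direct check). Hence it is absorbed by $p_{m-1}+K_{m-1}p_{m-2}$.
\item The piece $K_{m-2}^2(1+K_{m-3})p_{m-3}=K_{m-1}\,x(1+K_{m-3})p_{m-3}$ is absorbed by $K_{m-1}p_{m-2}$. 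This follows from \Cref{l:pm-1Apm} at the odd index $m-2$, or from \Cref{l:xp} and \Cref{l:triv} as in the paper.
\end{itemize}
Since $K_{m-1}p_{m-2}\abs K_{m-1}p_m$, this yields $K_{m-2}p_{m-1}\abs p_{m-1}+K_{m-1}p_m$, which closes Part 2.

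Your last paragraph names the right tools and the right unfolding. Aimed at $p_m+p_{m-1}$, however, the second piece has nowhere to go, because $K_{m-1}p_{m-2}$ is not a summand of $p_m$ for odd $m$.
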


\begin{proof}
\begin{enumerate}
  \item By the second hypothesis, 
  \begin{align*}
    (1+K_{m-2})p_{m-1}p_{m-2}
      &\abs (1+K_{m-2})p_{m-2} + (1+K_{m-2})K_{m-2}p_{m-1} \\
      &= p_{m-2} + K_{m-2}\big(p_{m-1} + p_{m-2} + K_{m-2}p_{m-1}\big) \\    
      &= p_{m-2} + K_{m-2}p_{m-2} + K_{m-2}^2p_{m-1},
  \end{align*}
  where the last step follows from \Cref{l:pm-1Km-1}. Multiplying  by $K_{m-1}$ leads to 
  \[
    K_{m-1}(1+K_{m-2})p_{m-1}p_{m-2} \abs K_{m-1}(1+K_{m-2})p_{m-2} + K_m p_{m-1}, 
  \]
  which, in turn, implies that
  \begin{align*}
    K_{m-1}(1+K_{m-2})p_{m-1}p_{m-2}
      &\abs p_{m-1} + K_{m-1}(1+K_{m-2})p_{m-2} \\
      &\qquad \qquad \qquad \qquad\qquad\quad+ K_m p_{m-1} \\
      &= p_m + K_m p_{m-1}.
  \end{align*}

\item From $K_{m-2}^2 = x K_{m-1}$ and \Cref{l:xp} \Cref{l:xp1}, it follows that $K_{m-2}^2 p_{m-3} \abs K_{m-1} p_{m-2}$, which shows
  \begin{equation}\label{eq:domA}
    K_{m-2}^2 p_{m-3} \abs p_{m-1} + K_{m-1} p_{m-2}.
  \end{equation}
  On the other hand, \Cref{l:triv} implies that $x K_{m-3} p_{m-3} \abs p_{m-2}$. Since $K_{m-2}^2 = x K_{m-1}$, it follows that $K_{m-2}^2 K_{m-3} p_{m-3} \abs K_{m-1} p_{m-2}$, and thus
  \[
    K_{m-2}^2 K_{m-3} p_{m-3} \abs p_{m-1} + K_{m-1} p_{m-2}.
  \]
  Combining this with \Cref{eq:domA} yields
  \begin{equation}\label{eq:domB}
    K_{m-2}^2(1 + K_{m-3}) p_{m-3} \abs p_{m-1} + K_{m-1} p_{m-2}.
  \end{equation}
Additionally, \Cref{l:Kmk4} \Cref{l:Kmk4i} yields
  \begin{equation}\label{eq:domC}
    K_{m-2}p_{m-2} \abs p_{m-1} + K_{m-1}p_{m-2}.
  \end{equation}
  Thus, combining the recursive definition of $p_{m-1}$ with \Cref{eq:domB,eq:domC} results in
  \begin{equation}\label{eq:UNewB}
    K_{m-2}p_{m-1} \abs p_{m-1} + K_{m-1}(1 + K_{m-2})p_{m-2}.
  \end{equation}
  Now, using the recursive definition of $p_m$ along with \Cref{eq:UNewB}, it follows that
  \begin{align}
    p&_{m-1} + K_{m-1}p_m 
      = \big(p_{m-1} + K_{m-1}(1 + K_{m-2})p_{m-2}\big) + K_m p_{m-1} \nonumber \\
      &= \big(K_{m-2}p_{m-1} + p_{m-1} + K_{m-1}(1 + K_{m-2})p_{m-2}\big) + K_m p_{m-1}. \label{eq:UNewC}
  \end{align}
  Recall the hypothesis that $p_{m-1}p_{m-2} \abs p_{m-1} + K_{m-1}p_{m-2}$. Multiplying both sides by $(1 + K_{m-2})$ leads to
  \begin{equation}\label{eq:mixA}
\begin{split}
  (1 + K_{m-2})p_{m-1}p_{m-2} &\abs p_{m-1} + K_{m-2}p_{m-1}\\ 
    &\quad + K_{m-1}(1 + K_{m-2})p_{m-2}.
\end{split}
\end{equation}
Therefore,
\begin{align*}
 \qquad\quad  & (1+K_{m-2})p_{m-2}p_{m-1} + p_{m-1} + K_{m-1}p_m \\
    &\quad = (1+K_{m-2})p_{m-2}p_{m-1} + K_{m}p_{m-1} \\
    &\qquad + \big(K_{m-2}p_{m-1} + p_{m-1} + K_{m-1}(1+K_{m-2})p_{m-2}\big) &{\text{(by \Cref{eq:UNewC})}} \\
    &\quad = \big(K_{m-2}p_{m-1} + p_{m-1} + K_{m-1}(1+K_{m-2})p_{m-2}\big) \\
    &\qquad \qquad\qquad \qquad\qquad \qquad\qquad \qquad \qquad \qquad+ K_m p_{m-1} &{\text{(by \Cref{eq:mixA})}} \\
    &\quad = p_{m-1} + K_{m-1}p_{m}, &{\text{(by \Cref{eq:UNewC})}} 
\end{align*}
where the last step follows from the recursive definition of $p_m$.
\end{enumerate}
\end{proof}

The next lemma provides a refinement of the previous result on simplifying mixed products of deformed Jacobsthal polynomials. In particular, Condition \eqref{eq:condSqrt} is introduced to establish an idempotency property for these polynomials by induction in \Cref{thm:idem}.

\begin{lemma}\label{l:UNewTa} Let $m\geq 1$. If 
\begin{equation}\label{eq:condSqrt}
p_{j}^2+K_{j}p_{j}= p_{j}+K_{j}p_{j},\text{ for all $0\leq j\leq m-1$,}
\end{equation}
then
  \[
p_mp_{m-1}\abs K_{m-1}^{\od(m)}p_m+K_m^{\ev(m)}p_{m-1}.
  \]
\end{lemma}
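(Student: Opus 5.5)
Induction on $m$, organized so that \Cref{l:UNewA} supplies the inductive step. The target for odd $m$ is $p_mp_{m-1}\abs K_{m-1}p_m+p_{m-1}$, and for even $m$ it is $p_mp_{m-1}\abs p_m+K_mp_{m-1}$. For $m=1$ we have $p_1p_0=0$, which is absorbed by anything. For $m=2$ one computes $p_2=(1+K_0)p_0+K_1\cdot 1$ directly and checks the claim by hand. For $m\ge 3$ we assume the statement for $m-1$; hypothesis \eqref{eq:condSqrt} for all $j\le m-1$ contains the hypothesis needed at $m-1$.

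\emph{Case $m$ even.} Expand $p_m=p_{m-1}+K_{m-1}(1+K_{m-2})p_{m-2}$. Then
\[
p_mp_{m-1}=p_{m-1}^2+K_{m-1}(1+K_{m-2})p_{m-1}p_{m-2}.
\]
The induction hypothesis at the odd index $m-1$ gives $p_{m-1}p_{m-2}\abs K_{m-2}p_{m-1}+p_{m-2}$, which is exactly the hypothesis of \Cref{l:UNewA}~\eqref{l:UNewA1}. Hence the second summand is absorbed by $p_m+K_mp_{m-1}$. For the first summand, \eqref{eq:condSqrt} with $j=m-1$ gives $p_{m-1}^2\abs p_{m-1}+K_{m-1}p_{m-1}$. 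Now $p_{m-1}\abs p_m$ by \Cref{p:Aabs}. By \Cref{l:Kmk4}~\eqref{l:Kmk4i}, $K_{m-1}p_{m-1}$ is absorbed by $p_m+K_m/k^4$, and $K_m/k^4\abs K_mp_{m-1}$ because $p_{m-1}$ has constant term $1$. Summing with \Cref{p:Aabs}~\eqref{p:Aabs3} finishes this case; \Cref{l:Kmk4} needs $m\ge4$, so $m=2$ is handled by the direct check.

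\emph{Case $m$ odd.} Expand $p_m=K_{m-2}^2p_{m-1}+(1+K_{m-2})p_{m-2}$. Then
\[
p_mp_{m-1}=K_{m-2}^2p_{m-1}^2+(1+K_{m-2})p_{m-1}p_{m-2}.
\]
The induction hypothesis at the even index $m-1$ gives $p_{m-1}p_{m-2}\abs p_{m-1}+K_{m-1}p_{m-2}$, so \Cref{l:UNewA}~\eqref{l:UNewA2} absorbs the second summand into $p_{m-1}+K_{m-1}p_m$. For the first summand, use \eqref{eq:condSqrt} to get $K_{m-2}^2p_{m-1}^2\abs K_{m-2}^2p_{m-1}+K_{m-2}^2K_{m-1}p_{m-1}$, and handle the two pieces separately:
\begin{itemize}
\item $K_{m-2}^2K_{m-1}p_{m-1}=K_mp_{m-1}=K_{m-1}\cdot K_{m-2}^2p_{m-1}$, and $K_{m-2}^2p_{m-1}\abs p_m$ by \Cref{l:triv}. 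Multiplying by $K_{m-1}$ gives $K_mp_{m-1}\abs K_{m-1}p_m$.
\item $K_{m-2}^2p_{m-1}=xK_{m-1}p_{m-1}$, and $xp_{m-1}\abs p_m$ by \Cref{l:xp}~\eqref{l:xp1}. Multiplying by $K_{m-1}$ gives $K_{m-2}^2p_{m-1}\abs K_{m-1}p_m$.
\end{itemize}
Summing the pieces finishes the odd case.

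The main obstacle is the base cases and the edge indices where the cited lemmas require $m\ge 3,4,5$. In particular, \Cref{l:UNewA}~\eqref{l:UNewA1} at $m=4$ relies on \Cref{l:pm-1Km-1} at $3$, and \Cref{l:Kmk4} is only available from $m=4$. So $m=2,3$ should be verified by explicit computation with $p_2=1+K_1$ and $p_3=K_1^2(1+K_1)+1+K_1$, using $K_0=k$ and $K_1=k^2x$. The other point to watch is that the hypothesis of \Cref{l:UNewA} is matched exactly by the induction hypothesis at $m-1$ under the parity swap, which the choice of exponents $\od,\ev$ in the statement is designed to ensure.
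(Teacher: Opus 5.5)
Your proof is essentially the paper's: you expand $p_mp_{m-1}$ through the same recursion, apply \Cref{l:UNewA} with the induction hypothesis at $m-1$ supplying its hypothesis, and dispose of the squared term the same way, via Condition~\eqref{eq:condSqrt} together with \Cref{l:Kmk4} in the even case or \Cref{l:triv} and \Cref{l:xp} in the odd case. One slip in your base cases: since $2$ is even, $p_2=p_1+K_1(1+K_0)p_0=1$ (not $1+K_1$), hence $p_3=1+K_1+K_1^2$, so $m=2$ reduces to the trivial $1\abs 1+K_2$, and $m=3$ needs no hand check because your odd-case argument already applies there.
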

\begin{proof}
The proof proceeds by induction on $m$. If $m=1$ or $m=2$, the result is trivial. Assume that $m > 2$ and that Condition~\eqref{eq:condSqrt} holds.

\textit{Case $m$ is odd:} The recursive definition of $p_m$ ensures that
\begin{align}
  p_m p_{m-1} &+ p_{m-1} + K_{m-1}p_m\nonumber\\
    &= K_{m-2}^2p_{m-1}^2 + (1+K_{m-2})p_{m-1}p_{m-2} + p_{m-1} + K_{m-1}p_m \nonumber \\
    &= K_{m-2}^2p_{m-1}^2 + p_{m-1} + K_{m-1}p_m, \label{eq:UNewD}
\end{align}
by \Cref{l:UNewA} \Cref{l:UNewA2}. Observe that $K_{m-2}^2p_{m-1}\abs p_m$. Multiplying both sides of this relation by $K_{m-1}$, we find that  $K_{m}p_{m-1}\abs K_{m-1}p_m$, which implies $K_{m}p_{m-1}\abs p_{m-1}+K_{m-1}p_m$. Then, from~\Cref{eq:UNewD}, it follows that
\begin{align*}
  p_{m}p_{m-1}+p_{m-1}+K_{m-1}p_m
    &= K_{m-2}^2(p_{m-1}^2+K_{m-1}p_{m-1})+p_{m-1}+K_{m-1}p_m\\
    &= K_{m-2}^2(p_{m-1}+K_{m-1}p_{m-1})+p_{m-1}+K_{m-1}p_m\\
    &= K_{m-2}^2p_{m-1}+p_{m-1}+K_{m-1}p_m.
\end{align*}
It remains to show that $K_{m-2}^2 p_{m-1}$ is absorbed by the last expression. Indeed, \Cref{l:xp} \Cref{l:xp1} implies that $xp_{m-1} \abs p_m$. Multiplying both sides of this relation by $K_{m-1}$, we get $K_{m-2}^2 p_{m-1} \abs K_{m-1} p_m$ (see \Cref{eq:Ks}).

\textit{Case $m$ is even:} The recursive definition of $p_m$ gives
\begin{align*}
  p_{m-1}p_m + p_m + K_m p_{m-1} 
    &= p_{m-1}^2 + K_{m-1}(1 + K_{m-2})p_{m-2}p_{m-1}  + p_m + K_m p_{m-1} \\
    &= p_{m-1}^2 + p_m + K_m p_{m-1},
\end{align*}
by \Cref{l:UNewA} \Cref{l:UNewA1} and \Cref{eq:condSqrt}. By \Cref{l:Kmk4} \Cref{l:Kmk4i}, we have $K_{m-1}p_{m-1} \abs p_m + K_m p_{m-1}$. Thus,
\begin{align*}
  p_{m-1}p_m+p_m+K_mp_{m-1}
    &= p_{m-1}^2+K_{m-1}p_{m-1}+ p_m+K_mp_{m-1}\\
    &= p_{m-1}+K_{m-1}p_{m-1}+p_m+K_mp_{m-1}\\
    &= p_m+K_mp_{m-1}. \quad {\text{(by \Cref{l:triv})}}
\end{align*}
\end{proof}

The following lemma establishes the key absorption estimates required for the inductive step in the proof of \Cref{thm:idem}.

\begin{lemma}\label{l:pAbs}
  Let $m \geq 2$. If $p_j^2 + K_j p_j = p_j + K_j p_j$ for all $0 \leq j \leq m-2$, then the following hold:
  \begin{enumerate}
    \item\label{l:pAbsi} If $m$ is even, then
      \[
        K_{m-1}(1 + K_{m-2})p_{m-1}p_{m-2} \abs (1 + K_m)p_m.
      \]
    \item\label{l:pAbsii} If $m$ is odd, then
      \[
        K_{m-2}^2(1 + K_{m-2})p_{m-1}p_{m-2} \abs (1 + K_m)p_m.
      \]
  \end{enumerate}
\end{lemma}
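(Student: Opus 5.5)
The plan is to reduce both parts to \Cref{l:UNewA}, whose hypotheses come from \Cref{l:UNewTa} applied at index $m-1$. The hypothesis of the present lemma is exactly Condition~\eqref{eq:condSqrt} for index $m-1$, since it covers all $0\le j\le (m-1)-1$. Hence, when $m-1\geq 1$, \Cref{l:UNewTa} yields
\[
p_{m-1}p_{m-2}\abs K_{m-2}^{\od(m-1)}p_{m-1}+K_{m-1}^{\ev(m-1)}p_{m-2}.
\]
For $m$ even this reads $p_{m-1}p_{m-2}\abs p_{m-2}+K_{m-2}p_{m-1}$. For $m$ odd it reads $p_{m-1}p_{m-2}\abs p_{m-1}+K_{m-1}p_{m-2}$. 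These are precisely the hypotheses of \Cref{l:UNewA} \Cref{l:UNewA1} and \Cref{l:UNewA2}, respectively.

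\textit{Case $m$ even.} \Cref{l:UNewA} \Cref{l:UNewA1} gives
\[
K_{m-1}(1+K_{m-2})p_{m-1}p_{m-2}\abs p_m+K_mp_{m-1}.
\]
It remains to show $K_mp_{m-1}\abs K_mp_m$. By \Cref{l:triv} with $m$ even, $p_{m-1}\abs p_m$, and multiplying by $K_m$ gives the claim. Combining, the right side is absorbed by $p_m+K_mp_m=(1+K_m)p_m$, which proves \Cref{l:pAbsi}.

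\textit{Case $m$ odd.} \Cref{l:UNewA} \Cref{l:UNewA2} gives
\[
(1+K_{m-2})p_{m-1}p_{m-2}\abs p_{m-1}+K_{m-1}p_m.
\]
Multiplying by $K_{m-2}^2$ yields
\[
K_{m-2}^2(1+K_{m-2})p_{m-1}p_{m-2}\abs K_{m-2}^2p_{m-1}+K_{m-2}^2K_{m-1}p_m .
\]
By \Cref{l:triv}, $K_{m-2}^2p_{m-1}\abs p_m$. By \Cref{eq:Ks}, $K_{m-2}^2K_{m-1}=K_m$, so the second term is exactly $K_mp_m$. Hence the right side is absorbed by $(1+K_m)p_m$, proving \Cref{l:pAbsii}.

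The main bookkeeping obstacle is the small cases. When $m=2$ we have $p_0=0$, so both products vanish and the claim is trivial. When $m=3$, \Cref{l:UNewTa} at index $2$ requires Condition~\eqref{eq:condSqrt} only for $j\le 1$, and the hypothesis supplies it, since $p_0=0$ and $p_1=1$ satisfy it trivially. The proof of \Cref{l:UNewA} \Cref{l:UNewA2} uses \Cref{l:Kmk4} \Cref{l:Kmk4i} at index $m-1$, which needs $m-1\ge4$. So $m=3$ should be checked directly. There $p_2=1+K_1$ (with $K_1=kx$ and $K_0=k$), and a direct expansion verifies the absorption. Beyond these indices, the argument is simply chaining the preceding lemmas together with the monotonicity rules of \Cref{p:Aabs}.
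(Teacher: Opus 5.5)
Your proof is correct, and it takes a shorter route than the paper's. You use \Cref{l:UNewTa} at index $m-1$ only to supply the hypothesis of \Cref{l:UNewA}. You then apply \Cref{l:UNewA} at index $m$ and finish with a single monomial multiplication. In (i), $p_m+K_mp_{m-1}\abs(1+K_m)p_m$ because $p_{m-1}\abs p_m$. In (ii), you multiply by $K_{m-2}^2$ and use $K_{m-2}^2K_{m-1}=K_m$ together with $K_{m-2}^2p_{m-1}\abs p_m$.

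The paper never invokes \Cref{l:UNewA} in this proof. Instead it splits the factor $1+K_{m-2}$ and bounds the two resulting products separately: $K_{m-1}K_{m-2}p_{m-1}p_{m-2}$ and $K_{m-1}p_{m-1}p_{m-2}$ in (i), and $K_{m-2}^2p_{m-1}p_{m-2}$ and $K_{m-2}^3p_{m-1}p_{m-2}$ in (ii). Each is handled by multiplying the output of \Cref{l:UNewTa} at $m-1$ by a monomial and, where needed, proving an auxiliary absorption. These auxiliary absorptions are $p_{m-1}\abs p_{m-2}+K_{m-2}p_m$ in (i) and $K_{m-2}p_{m-1}\abs p_{m-1}+K_{m-1}p_m$ in (ii). The latter is exactly what \eqref{eq:UNewB}--\eqref{eq:UNewC} already establish inside the proof of \Cref{l:UNewA} \Cref{l:UNewA2}. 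So the paper largely re-derives, term by term, absorptions that \Cref{l:UNewA} already packages. Your version reuses that lemma and is correspondingly shorter. There is no circularity, since \Cref{l:UNewA} does not depend on \Cref{thm:idem}. Your observation that the proof of \Cref{l:UNewA} \Cref{l:UNewA2} needs $m\ge 5$ (it uses \Cref{l:Kmk4} \Cref{l:Kmk4i} at $m-1$) is more careful index bookkeeping than the paper's.

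There is one slip in your $m=3$ base case. The correct values are $K_1=k^2x$, not $kx$, and $p_2=1$, not $1+K_1$, because the second summand in the recursion for $p_2$ contains $p_0=0$. With the correct values the direct check is immediate: $K_1^2(1+K_1)p_2p_1=k^4x^2+k^6x^3$ is absorbed by $(1+K_3)p_3=1+k^2x+k^4x^2+k^8x^3+k^{10}x^4+k^{12}x^5$.
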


\begin{proof}
  \begin{enumerate}
\item If $m=2$, then $p_{m-2}=0$ and the result is trivial. Thus, assume $m \geq 4$. By \Cref{l:triv},
    \begin{equation}
      \label{eq:UNewH}
      K_m p_{m-1} \abs K_m p_m \abs p_m + K_m p_m.
    \end{equation}
    Moreover, since $K_{m-1}K_{m-2}p_{m-2}$ is a summand in the recursive definition of $p_m$, it follows that
    \begin{equation}\label{eq:UNewJ}
      K_{m-1}K_{m-2}p_{m-2} \abs p_m \abs p_m + K_m p_m. 
    \end{equation}
    Hence, \Cref{eq:UNewH,eq:UNewJ} imply that
    \begin{equation}
      \label{eq:UNewK}
      K_m p_{m-1} + K_{m-1}K_{m-2}p_{m-2} \abs (1 + K_m)p_m. 
    \end{equation}
    Applying \Cref{l:UNewTa} to the odd $m-1$ and multiplying by $K_{m-1}K_{m-2}$ gives
    \[
      K_{m-1}K_{m-2}p_{m-1}p_{m-2} \abs K_m p_{m-1} + K_{m-1}K_{m-2}p_{m-2},
    \]
    which, together with~\Cref{eq:UNewK} and the transitivity of absorption, gives
    \begin{equation}
      \label{eq:UNewM}
      K_{m-1}K_{m-2}p_{m-1}p_{m-2} \abs (1 + K_m)p_m.
    \end{equation}
    Next, applying \Cref{l:red,l:UNewTa} again yields
    \begin{align*}
      p_{m-1} &\abs p_{m-1}p_{m-2} \abs K_{m-2}p_{m-1} + p_{m-2} \\
              &\abs K_{m-2}p_{m-1} + p_{m-2} + K_{m-1}K_{m-2}(1 + K_{m-2})p_{m-2} \\
              &= p_{m-2} + K_{m-2}p_m.
    \end{align*}

That is, $p_{m-1} \abs p_{m-2} + K_{m-2}p_m$. Multiplying both sides of this relation by $K_{m-1}K_{m-2}$, we find that
    \begin{align*}
      \qquad K_{m-1}K_{m-2}p_{m-1} 
        &\abs K_{m-1}K_{m-2}p_{m-2} + K_m p_m \\
        &\abs p_{m-1} + K_{m-1}p_{m-2} + K_{m-1}K_{m-2}p_{m-2} + K_m p_m \\
        &= p_m + K_m p_m. 
    \end{align*}
    Thus,
    \begin{equation}
      \label{eq:UNewL}
      K_{m-1}K_{m-2}p_{m-1} \abs (1 + K_m)p_m.
    \end{equation} 
    Additionally, the recursive definition of $p_m$ implies $K_{m-1}p_{m-2} \abs p_m$, while $p_m \abs p_m + K_m p_m$ holds trivially. By transitivity, $K_{m-1}p_{m-2} \abs (1 + K_m)p_m$, which together with~\Cref{eq:UNewL} yields
    \[
      K_{m-1}K_{m-2}p_{m-1} + K_{m-1}p_{m-2} \abs (1 + K_m)p_m.
    \]
    Combining this with $K_{m-1}p_{m-1}p_{m-2} \abs K_{m-1}K_{m-2}p_{m-1} + K_{m-1}p_{m-2}$ (by \Cref{l:UNewTa}) via transitivity gives
    \begin{equation}
      \label{eq:UNewN}
      K_{m-1}p_{m-1}p_{m-2} \abs (1 + K_m)p_m.  
    \end{equation}
    Finally, the absorptions in~\Cref{eq:UNewM,eq:UNewN} lead to
    \[
      K_{m-1}(1 + K_{m-2})p_{m-1}p_{m-2} \abs (1 + K_m)p_m.
    \]
    
    \item Straightforward calculations show that the result holds for $m=3$. Thus, assume $m\geq 5$. Now, the recursive definition of $p_m$ contains $p_{m-2}$ as a summand, so $K_m p_{m-2} \abs K_m p_m$. It follows that $K_m p_{m-2} \abs p_m + K_m p_m$. Moreover, $K_{m-2}^2 p_{m-1}$ is also a summand of $p_m$, so $K_{m-2}^2 p_{m-1} \abs p_m + K_m p_m$. Hence,
  \begin{equation}
    \label{eq:UNewO}
    K_{m-2}^2 p_{m-1} + K_m p_{m-2} \abs (1 + K_m) p_m.
  \end{equation}
  In turn, applying \Cref{l:UNewTa} to the even integer $m-1$ and multiplying by $K_{m-2}^2$ we obtain
  \begin{equation}
    \label{eq:UNewP}
    K_{m-2}^2 p_{m-1} p_{m-2} \abs K_{m-2}^2 p_{m-1} + K_m p_{m-2}.  
  \end{equation}
  Combining \Cref{eq:UNewP,eq:UNewO} by transitivity yields
  \begin{equation}
    \label{eq:UNewQ}
    K_{m-2}^2 p_{m-1} p_{m-2} \abs (1 + K_m) p_m.
  \end{equation}    
 Additionally, by \Cref{l:pm-1Apm}, $x(1+K_{m-3})p_{m-3}\abs p_{m-2}$. Multiplying this relation by $K_{m-1}$ gives
\begin{equation}
  \label{eq:UNewA1}
  K_{m-2}^2(1+K_{m-3})p_{m-3}\abs K_{m-1}p_{m-2}\abs p_{m-1}+K_{m-1}p_{m-2}.
\end{equation}
Observe also that by \Cref{l:Kmk4} \Cref{l:Kmk4i}, $K_{m-2}p_{m-2}\abs p_{m-1}+K_{m-1}/k^4$, whence
\begin{equation}
  \label{eq:UNewA2}
  K_{m-2}p_{m-2}\abs p_{m-1}+\frac{K_{m-1}}{k^4}+K_{m-1}p_{m-2}= p_{m-1}+K_{m-1}p_{m-2}.
\end{equation}
In view of \Cref{eq:UNewA1,eq:UNewA2}, it follows that
\begin{align*}
  K_{m-2}p_{m-1} 
    &= K_{m-2}p_{m-2} + K_{m-2}^2(1+K_{m-3})p_{m-3}\\
    &\abs p_{m-1}+K_{m-1}p_{m-2},
\end{align*}
where
\begin{align*}
  p_{m-1}+K_{m-1}p_{m-2}
    &\abs p_{m-1}+K_{m-1}p_{m-2}+K_{m-1}K_{m-2}p_{m-2}\\
    &\qquad +K_mp_{m-1}=p_{m-1}+K_{m-1}p_m.
\end{align*}
By transitivity, $K_{m-2}p_{m-1} \abs p_{m-1} + K_{m-1}p_m$. Multiplying both sides of this relation by $K_{m-2}^2$ implies
\begin{equation}\label{eq:ExA}
  K_{m-2}^3 p_{m-1} \abs K_{m-2}^2 p_{m-1} + K_m p_m \abs p_m + K_m p_m.
\end{equation}
Furthermore,
\begin{equation}\label{eq:ExB}
  K_{m-2}K_m p_{m-2} \abs K_m p_m \abs p_m + K_m p_m.
\end{equation}
Therefore, from \Cref{eq:ExA,eq:ExB}, it follows that
\begin{equation}\label{eq:ExC}
  K_{m-2}^3 p_{m-1} + K_{m-2}K_m p_{m-2} \abs (1 + K_m)p_m.
\end{equation}
Applying \Cref{l:UNewTa} to the even integer $m-1$ and multiplying by $K_{m-2}^3$, we find that
\[
  K_{m-2}^3 p_{m-1} p_{m-2} \abs K_{m-2}^3 p_{m-1} + K_{m-2}K_m p_{m-2},
\]
which, by transitivity with \Cref{eq:ExC}, implies
\begin{equation}
  \label{eq:UNewA3}
  K_{m-2}^3 p_{m-1} p_{m-2} \abs (1 + K_m)p_m.  
\end{equation}
Combining \Cref{eq:UNewQ,eq:UNewA3} gives
\[
  K_{m-2}^2 p_{m-1} p_{m-2} + K_{m-2}^3 p_{m-1} p_{m-2} \abs (1 + K_m)p_m.
\]
\end{enumerate}
\end{proof}

The deformed Jacobsthal polynomials satisfy an idempotency property.

\begin{theorem}\label{thm:idem}
  For any $m \geq 0$,
  \[
    p_m^2 + K_m p_m = p_m + K_m p_m.
  \]  
\end{theorem}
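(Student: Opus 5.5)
We prove \Cref{thm:idem} by strong induction on $m$. The hypotheses of \Cref{l:UNewTa} and \Cref{l:pAbs} are exactly instances of the statement for smaller indices, so they become available inside the induction.

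\textbf{Base cases.} For $m=0$ we have $p_0=0$, and for $m=1$ we have $p_1=1$; in both cases the identity is trivial. For $m=2$ and $m=3$ we check directly. For instance, $p_2=1+K_1(1+K_0)=1+2kx$, which in $E[[x]]$ equals $1+kx$. Squaring gives $1+kx+k^2x^2$. Adding $K_2p_2=k^4x(1+kx)$, whose $x$- and $x^2$-terms dominate, recovers $p_2+K_2p_2$. The case $m=3$ is a similar finite computation.

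\textbf{Inductive step.} Assume $m\ge 4$ and that the identity holds for all $j\le m-1$. Expand $p_m^2$ using the recursion $p_m=A+B$, where
\[
A=K_{m-2}^{2\od(m)}p_{m-1},\qquad B=K_{m-1}^{\ev(m)}(1+K_{m-2})p_{m-2}.
\]
In the dioid, $p_m^2=A^2+AB+B^2$, since $2AB=AB$. It then suffices to show that each of $A^2$, $AB$, $B^2$ is absorbed by $p_m+K_mp_m$.

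\emph{Cross term.} Up to a monomial, $AB$ is precisely the product handled by \Cref{l:pAbs}.
\begin{itemize}
\item For even $m$, $AB=K_{m-1}(1+K_{m-2})p_{m-1}p_{m-2}$, which \Cref{l:pAbs}\Cref{l:pAbsi} absorbs into $(1+K_m)p_m$.
\item For odd $m$, $AB=K_{m-2}^2(1+K_{m-2})p_{m-1}p_{m-2}$, which \Cref{l:pAbs}\Cref{l:pAbsii} absorbs.
\end{itemize}
The hypothesis of \Cref{l:pAbs} only needs indices $\le m-2$, so it is covered by the induction.

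\emph{Square terms.} We use the inductive hypothesis, which gives $p_j^2\abs p_j+K_jp_j$ for $j=m-1,m-2$, together with $(1+K_{m-2})^2=1+K_{m-2}+K_{m-2}^2$ in the dioid.
\begin{itemize}
\item Odd $m$: here $A^2=K_{m-2}^4p_{m-1}^2\abs K_{m-2}^4(p_{m-1}+K_{m-1}p_{m-1})$. The term $K_{m-2}^4p_{m-1}$ is absorbed by $K_{m-2}^2\cdot K_{m-2}^2p_{m-1}\abs K_{m-2}^2p_m$, hence lies below $K_mp_m$ because $K_{m-2}^2\abs K_m$ coefficientwise in $k$-degree up to the required $x$-shift. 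I expect to need \Cref{l:xp}, via $x^{\ev}K_m=x^{\od}K_{m-1}^2$, to align the $x$-powers. The term $K_{m-2}^4K_{m-1}p_{m-1}=K_{m-2}^2K_mp_{m-1}$ is absorbed by $K_mp_m$ using \Cref{l:triv}. For $B^2=(1+K_{m-2})^2p_{m-2}^2$, the inductive hypothesis reduces $K_{m-2}^ip_{m-2}^2$ to $K_{m-2}^i(p_{m-2}+K_{m-2}p_{m-2})$. The resulting terms $p_{m-2}$, $K_{m-2}p_{m-2}$ and $K_{m-2}^2p_{m-2}$ are controlled by the recursion and \Cref{l:Kmk4}\Cref{l:Kmk4ii}, which says $K_{m-2}^3p_{m-2}\abs p_m+K_m/k^4$.
\item Even $m$: here $A^2=p_{m-1}^2\abs p_{m-1}+K_{m-1}p_{m-1}$, and \Cref{l:Kmk4}\Cref{l:Kmk4i} gives $K_{m-1}p_{m-1}\abs p_m+K_m/k^4\abs p_m$. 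Next, $B^2=K_{m-1}^2(1+K_{m-2})^2p_{m-2}^2$. Since $K_{m-1}^2=xK_m$, this is $K_m\cdot x(1+K_{m-2})^2p_{m-2}^2$. It then suffices that $x(1+K_{m-2})^2p_{m-2}^2\abs p_m$, which follows from \Cref{l:pm-1Apm} applied to the odd index $m-1$ once the square is reduced by induction.
\end{itemize}

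The reverse absorption, $p_m+K_mp_m\abs p_m^2+K_mp_m$, follows from \Cref{l:red}, since $p_m\abs p_m^2$. Combining the two directions gives the equality.

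\textbf{Main obstacle.} The main obstacle is the square terms in the odd case, specifically the leftover $K_{m-2}^2p_{m-2}^2$-type pieces. If the direct bookkeeping fails, I would first try rewriting $B^2$ as $B\cdot(1+K_{m-2})p_{m-2}$. Then \Cref{l:UNewTa} and \Cref{l:pm-1Km-1} can convert $p_{m-2}$-products into $p_{m-1}$-terms already shown to be absorbed.
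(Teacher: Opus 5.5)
Your proof follows the paper's route. You expand $p_m^2=A^2+AB+B^2$ from the recursion, reduce the squared factors with the induction hypothesis, dispose of the cross term with \Cref{l:pAbs}, and absorb everything else into $p_m+K_mp_m$. You write out both parities, whereas the paper gives only the even one. Your even-case packaging of $B^2$ as $K_m\cdot x(1+K_{m-2})^2p_{m-2}^2$ is slightly tidier than the paper's term-by-term treatment. The skeleton is sound, but several individual steps are false as written and need the following repairs.

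\textbf{Base case $m=2$.} Since $p_0=0$, the recursion gives $p_2=p_1=1$, so this case is trivial; your value $1+2kx$ is wrong.

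\textbf{The term $K_m/k^4$.} In the even case, ``$p_m+K_m/k^4\abs p_m$'' is false: $K_m/k^4=k^{2^m-4}x^{J_m}$ lies above $\deg p_m=J_m-1$ (\Cref{l:degp_m}). What is true, and suffices, is $K_m/k^4\abs K_mp_m$, because the constant term of $p_m$ is $1$. Then \Cref{l:Kmk4} gives $K_{m-1}p_{m-1}\abs p_m+K_mp_m$ for even $m$ and $K_{m-2}^3p_{m-2}\abs p_m+K_mp_m$ for odd $m$.

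\textbf{Odd case, $A^2$.} The step ``$K_{m-2}^2p_m$ lies below $K_mp_m$'' fails. $K_{m-2}^2$ and $K_m$ are monomials at $x^{2J_{m-2}}$ and $x^{J_m}$ with $2J_{m-2}<J_m$, so the lowest term of $K_{m-2}^2p_m$ is not absorbed by $K_mp_m$. The correct move is the exact identity $K_{m-2}^4=K_{m-2}^2\cdot xK_{m-1}=xK_m$, which uses $xK_{m-1}=K_{m-2}^2$ for even $m-1$ and $K_m=K_{m-1}K_{m-2}^2$. Hence $K_{m-2}^4p_{m-1}=K_m\cdot xp_{m-1}\abs K_mp_m$ by \Cref{l:xp} \Cref{l:xp1}.

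\textbf{Even case, $B^2$.} \Cref{l:pm-1Apm} alone does not finish this term. After reducing by the induction hypothesis you face $x(1+K_{m-2})^3p_{m-2}$, while \Cref{l:pm-1Apm} only absorbs $x(1+K_{m-2})p_{m-2}$ into $p_{m-1}$. The leftover is $xK_{m-2}^2(1+K_{m-2})p_{m-2}=K_{m-1}(1+K_{m-2})p_{m-2}$, since $K_{m-1}=xK_{m-2}^2$ for odd $m-1$. It is absorbed simply because it is the second summand of $p_m$.

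With these repairs the odd-case square terms are fully accounted for by direct bookkeeping. So the ``main obstacle'' you anticipate does not arise, and the fallback via \Cref{l:UNewTa} and \Cref{l:pm-1Km-1} is unnecessary.
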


\begin{proof}
  The proof proceeds by induction on $m$. The base cases $m=0,1$ follow because $p_m^2=p_m$ for these values. Assume now that $m\geq 2$. If $m$ is even, then the recursive definition of $p_m$ yields
  \begin{multline}
    p_m^2 = p_{m-1}^2 + K_{m-1}(1+K_{m-2})p_{m-1}p_{m-2} \\
    + K_{m-1}^2 K_{m-2}^2 p_{m-2}^2 + K_{m-1}^2 K_{m-2} p_{m-2}^2 + K_{m-1}^2 p_{m-2}^2, \label{eq:pm2ev}
  \end{multline}
  where nearly every squared term $p_{m-j}^2$ ($j=1,2$) in this expression can be simplified by the induction hypothesis. For instance, the first summand $p_{m-1}^2$ reduces to $p_{m-1}$ because $K_{m-1}p_{m-1}$ can be added to the right-hand side of \Cref{eq:pm2ev}, as it is absorbed by $K_{m-1}p_{m-1}p_{m-2}$ according to \Cref{l:red}. Similarly, $K_{m-1}^2K_{m-2}p_{m-2}^2$ reduces to $K_{m-1}^2K_{m-2}p_{m-2}$ because the term $K_{m-1}^2K_{m-2}^2p_{m-2}$ can be added, which is absorbed by $K_{m-1}^2K_{m-2}^2p_{m-2}^2$ again by \Cref{l:red}. The term $K_{m-1}^2p_{m-2}^2$ is handled in the same manner. Hence, it follows that
\begin{multline*}
  p_m^2 = p_{m-1} + K_{m-1}(1+K_{m-2})p_{m-1}p_{m-2} \\
  + K_{m-1}^2 K_{m-2}^2 p_{m-2}^2 + K_{m-1}^2 K_{m-2} p_{m-2} + K_{m-1}^2 p_{m-2}.
\end{multline*}
Now, from $p_{m-2} \abs p_{m-1}p_{m-2}$ and \Cref{l:pAbs}, it follows that
\begin{multline}\label{eq:pm2evB}
  p_m^2 + K_m p_m = p_m + K_m p_m + K_{m-1}^2 K_{m-2}^2 p_{m-2}^2 \\
  + K_{m-1}^2 K_{m-2} p_{m-2} + K_{m-1}^2 p_{m-2}.
\end{multline}
To obtain the result, it suffices to prove that the last three summands on the right-hand side of \Cref{eq:pm2evB} are absorbed by $K_m p_m$. Indeed, the recursive definition of $p_m$ implies $K_{m-1}K_{m-2} \abs p_m$, which, upon multiplying by $K_m$, yields that  $K_m p_m$ absorbs $K_{m-1}^2 K_{m-2}^3 p_{m-2}$. Thus, the term $K_{m-1}^2 K_{m-2}^3 p_{m-2}$ can be added to the right-hand side of \Cref{eq:pm2evB}. By applying the induction hypothesis, $K_{m-1}^2 K_{m-2}^2 p_{m-2}^2$ then reduces to $K_{m-1}^2 K_{m-2}^2 p_{m-2}$. Furthermore, $K_{m-1}p_{m-2} \abs p_m$, which yields $K_{m-1}^2 K_{m-2}^2 p_{m-2} \abs K_m p_m$ upon multiplying by $K_m$. Next, the recursive definition of $p_{m-1}$ shows that $K_{m-1}p_{m-2}$ is a summand of $K_{m-2}p_{m-1}$, so $K_{m-1}p_{m-2} \abs K_{m-2}p_{m-1}$. Multiplying by $K_{m-1}K_{m-2}$ gives $K_{m-1}^2 K_{m-2} p_{m-2} \abs K_m p_{m-1}$. Since $K_m p_{m-1} \abs K_m p_m$ from the recursive definition of $p_m$, transitivity ensures that $K_{m-1}^2 K_{m-2} p_{m-2} \abs K_m p_m$.
Finally, \Cref{l:xp} \Cref{l:xp1} and \Cref{l:triv} yield
\[
  x p_{m-2} \abs p_{m-1} \abs p_m,
\]    
which, upon multiplying by $K_m$, gives $K_{m-1}^2 p_{m-2} = x K_m p_{m-2} \abs K_m p_m$.

The case where $m$ is odd is similar.
\end{proof}

Applying \Cref{thm:idem} iteratively produces the following identity for weighted sums of $p_m$.
\begin{corollary}\label{c:prov}
  For all $\ell, m \geq 1$,
  \[
    \sum_{j=0}^{\ell-1} K_m^j p_m^2 + K_m^\ell p_m = \sum_{j=0}^\ell K_m^j p_m.
  \]
\end{corollary}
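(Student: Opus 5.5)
We argue by induction on $\ell \geq 1$, with $m \geq 1$ fixed throughout.

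For $\ell = 1$ the identity reads $p_m^2 + K_m p_m = p_m + K_m p_m$. This is exactly \Cref{thm:idem}.

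For the inductive step, assume the identity for some $\ell \geq 1$ and write $S_\ell = \sum_{j=0}^{\ell} K_m^j p_m$. Splitting off the top summand, the left-hand side for $\ell+1$ is
\[
\sum_{j=0}^{\ell-1} K_m^j p_m^2 + K_m^{\ell} p_m^2 + K_m^{\ell+1} p_m .
\]
The key step is to handle the single term $K_m^{\ell} p_m^2$. Multiplying \Cref{thm:idem} by $K_m^{\ell}$ gives
\[
K_m^{\ell} p_m^2 + K_m^{\ell+1} p_m = K_m^{\ell} p_m + K_m^{\ell+1} p_m .
\]
Substituting this, the left-hand side becomes
\[
\sum_{j=0}^{\ell-1} K_m^j p_m^2 + K_m^{\ell} p_m + K_m^{\ell+1} p_m .
\]
The first two groups are precisely the left-hand side of the identity for $\ell$, so the induction hypothesis replaces them by $S_\ell$. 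The whole expression is then $S_\ell + K_m^{\ell+1} p_m = S_{\ell+1}$, as required.

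Every step uses only commutativity and associativity of addition in the semiring $\Ring$ and distributivity of multiplication by the monomial $K_m^{\ell}$. No cancellation is needed, which matters because the dioid $\Ring$ has no additive inverses. The only subtlety is this: the term $K_m^{\ell}p_m^2$ cannot be simplified on its own, so we must group it with $K_m^{\ell+1}p_m$ before applying \Cref{thm:idem}. This grouping is why the statement keeps the tail term $K_m^\ell p_m$ on the left. The main obstacle, the idempotency itself, is already supplied by \Cref{thm:idem}.
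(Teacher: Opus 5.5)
Your proposal is correct and takes the same approach as the paper, which states only that the identity follows by a routine induction on $\ell$ with \Cref{thm:idem} as the base case. Your inductive step spells out that routine step: multiply \Cref{thm:idem} by $K_m^{\ell}$ to replace $K_m^{\ell}p_m^2$ by $K_m^{\ell}p_m$, then apply the hypothesis for $\ell$.
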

\begin{proof}
  This follows by a routine induction argument on $\ell$, with \Cref{thm:idem} serving as the base case.
\end{proof}

Building on the previous result, the following corollary shows how to completely reduce higher-degree terms of $p_m$.

\begin{corollary}\label{c:NoPow}
  If $\ell\geq 0$, then for any $m\geq 1$,
  \[
    \sum_{j=0}^{\ell}K_m^jp_m^{\ell+1-j}= \sum_{j=0}^\ell K_m^j p_m.
  \]
\end{corollary}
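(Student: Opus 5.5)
We argue by induction on $\ell$, with $m\geq 1$ fixed. Write $S_\ell=\sum_{j=0}^{\ell}K_m^jp_m$ for the right-hand side and $L_\ell=\sum_{j=0}^{\ell}K_m^jp_m^{\ell+1-j}$ for the left-hand side. For $\ell=0$ both sides equal $p_m$. For $\ell=1$ the claim reads $p_m^2+K_mp_m=p_m+K_mp_m$, which is exactly \Cref{thm:idem}.

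For the inductive step, assume $L_\ell=S_\ell$ and split off the $j=\ell+1$ term:
\[
L_{\ell+1}=\sum_{j=0}^{\ell}K_m^jp_m^{\ell+2-j}+K_m^{\ell+1}p_m=p_m\,L_\ell+K_m^{\ell+1}p_m .
\]
Applying the induction hypothesis to the first summand gives
\[
L_{\ell+1}=p_mS_\ell+K_m^{\ell+1}p_m=\sum_{j=0}^{\ell}K_m^jp_m^2+K_m^{\ell+1}p_m .
\]
This is exactly the left-hand side of \Cref{c:prov} with $\ell$ replaced by $\ell+1$. That corollary therefore yields $L_{\ell+1}=\sum_{j=0}^{\ell+1}K_m^jp_m=S_{\ell+1}$, which closes the induction.

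The only point to check is that the substitution $L_\ell=S_\ell$ is legitimate inside the product $p_mL_\ell$. This holds because equality in the dioid $\Ring$ is preserved by multiplication, so no absorption argument is needed there. The real content lies in \Cref{c:prov}, and through it in \Cref{thm:idem}. If one prefers not to lean on \Cref{c:prov} as stated, its routine induction can be reproduced inline. Adding $K_m^{\ell}p_m$ to both sides of $p_m^2=p_m^2$ and using $K_m^{\ell}(p_m^2+K_mp_m)=K_m^{\ell}(p_m+K_mp_m)$ converts the top power term by term, starting from the highest index $j=\ell$ and peeling downwards. This reverse-order peeling is the step requiring the most care, since each application of \Cref{thm:idem} needs the next higher term $K_m^{j+1}p_m$ to already be present.
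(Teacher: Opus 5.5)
Your proof is correct and is essentially the paper's own argument: induction on $\ell$, writing $L_{\ell+1}=p_mL_\ell+K_m^{\ell+1}p_m$, substituting the induction hypothesis to get $\sum_{j=0}^{\ell}K_m^jp_m^2+K_m^{\ell+1}p_m$, and concluding with \Cref{c:prov} at index $\ell+1$. The extra base case $\ell=1$ and the inline sketch of \Cref{c:prov} are harmless but not needed.
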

\begin{proof}
  The proof proceeds by induction on $\ell$. The base case $\ell=0$ is the trivial $p_m= p_m$. Furthermore,
  \begin{align*}
    \sum_{j=0}^{\ell+1}K_m^jp_m^{\ell+2-j}
      &= p_m\sum_{j=0}^{\ell}K_m^jp_m^{\ell+1-j}+K_m^{\ell+1}p_m\\
      &= \sum_{j=0}^\ell K_m^jp_m^2+K_m^{\ell+1}p_m & {\text{(by the induction hypothesis)}}\\
      &= \sum_{j=0}^{\ell+1}K_m^jp_m &{\text{(by \Cref{c:prov}).}}
  \end{align*}
\end{proof}

The following two-step reduction formula facilitates the remaining estimates.
\begin{lemma}\label{l:Alst}
  Let $m \geq 4$. Then
  \begin{equation}
    \label{eq:recA}
    p_m + \frac{K_m}{k^4} = x^{\od(m)} \left( \sum_{j=0}^1 K_{m-1}^j p_{m-1} \right) + x^{\ev(m)} \left( \sum_{j=0}^3 K_{m-2}^j p_{m-2} \right).
  \end{equation}
\end{lemma}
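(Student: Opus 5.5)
We split by the parity of $m$, expand the right-hand side of \eqref{eq:recA}, and reduce it to the recursive definition of $p_m$ plus $K_m/k^4$. Throughout we use \Cref{eq:Ks} and the absorption lemmas of \Cref{s:Jacobsthal}.

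\emph{Case $m$ even.} The right-hand side is
\[
p_{m-1}+K_{m-1}p_{m-1}+x\,(1+K_{m-2}+K_{m-2}^2+K_{m-2}^3)p_{m-2}.
\]
By \Cref{eq:Ks}, $xK_{m-2}^2=K_{m-1}$ when $m$ is even, since then $m-1$ is odd and $x^{\ev(m-1)}K_{m-1}=K_{m-1}=x^{\od(m-1)}K_{m-2}^2$. Hence $xK_{m-2}^2(1+K_{m-2})p_{m-2}=K_{m-1}(1+K_{m-2})p_{m-2}$, and together with $p_{m-1}$ this is exactly $p_m$. The remaining terms are $K_{m-1}p_{m-1}$ and $x(1+K_{m-2})p_{m-2}$. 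By \Cref{l:Kmk4} \Cref{l:Kmk4i}, $p_m+K_{m-1}p_{m-1}=p_m+K_m/k^4$, which produces the extra summand. By \Cref{l:pm-1Apm} applied to the odd index $m-1$, $x(1+K_{m-2})p_{m-2}\abs p_{m-1}\abs p_m$, so this term disappears. The result is $p_m+K_m/k^4$.

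\emph{Case $m$ odd.} The right-hand side is
\[
x(1+K_{m-1})p_{m-1}+(1+K_{m-2}+K_{m-2}^2+K_{m-2}^3)p_{m-2}.
\]
For $m$ odd, \Cref{eq:Ks} gives $xK_{m-1}=K_{m-2}^2$, so $xK_{m-1}p_{m-1}=K_{m-2}^2p_{m-1}$. Together with $(1+K_{m-2})p_{m-2}$ this reconstructs $p_m$. The leftover terms are the following.
\begin{itemize}
\item $xp_{m-1}$ is absorbed by $p_m$ by \Cref{l:xp} \Cref{l:xp1}.
\item $K_{m-2}^3p_{m-2}$ gives $p_m+K_m/k^4$ by \Cref{l:Kmk4} \Cref{l:Kmk4ii}, which needs $m\ge5$.
\item $K_{m-2}^2p_{m-2}$ must be absorbed by $p_m+K_m/k^4$.
\end{itemize}
For the last item, I would use $p_{m-2}\abs p_{m-1}$, which holds because $p_{m-2}$ is a summand of $p_{m-1}=K_{m-2}(1+K_{m-3})p_{m-3}\cdot(\ldots)+\ldots$ when $m-1$ is even; more precisely, $p_{m-1}=p_{m-2}+K_{m-2}(1+K_{m-3})p_{m-3}$, so \Cref{p:Aabs} \Cref{p:Aabs2} applies. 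Multiplying by $K_{m-2}^2$ gives $K_{m-2}^2p_{m-2}\abs K_{m-2}^2p_{m-1}\abs p_m$ by \Cref{l:triv}.

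\emph{Case $m=4$.} The odd case is valid for $m\ge5$, so the only base case left is the even value $m=4$. It is covered by \Cref{l:Kmk4} \Cref{l:Kmk4i}, whose base case is $m=4$, and \Cref{l:pm-1Apm}. If needed, I would also check it directly from $p_2,p_3,p_4$.

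The main obstacle is the bookkeeping of the parity-dependent identities in \Cref{eq:Ks}, that is, deciding for each parity whether $xK_{m-2}^2$ equals $K_{m-1}$ or $xK_{m-1}$ equals $K_{m-2}^2$. It also requires ensuring that each leftover term is absorbed by $p_m$ itself, or else contributes exactly the $K_m/k^4$ term through \Cref{l:Kmk4}. Once those identities are set up, the argument is routine.
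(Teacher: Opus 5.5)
Your proof is correct and follows the paper's argument. In the odd case you reconstruct $p_m$ via $xK_{m-1}=K_{m-2}^2$, absorb $xp_{m-1}$ and $K_{m-2}^2p_{m-2}$ into $p_m$, and turn $K_{m-2}^3p_{m-2}$ into $K_m/k^4$ through \Cref{l:Kmk4} \Cref{l:Kmk4ii}, exactly as the paper does; in the even case the only difference is that you absorb $x(1+K_{m-2})p_{m-2}$ into $p_{m-1}$ in one step via \Cref{l:pm-1Apm}, whereas the paper splits it into $xp_{m-2}$ (\Cref{l:xp} \Cref{l:xp1}) and $xK_{m-2}p_{m-2}=K_{m-3}^2p_{m-2}$ (\Cref{l:triv}), and your separate paragraph on $m=4$ is redundant since the even-case argument already applies at $m=4$.
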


\begin{proof}
  If $m$ is odd, the right-hand side of \Cref{eq:recA} becomes
  \begin{align*}
    x p_{m-1} &+ x K_{m-1} p_{m-1} + p_{m-2} + K_{m-2} p_{m-2} + K_{m-2}^2 p_{m-2} + K_{m-2}^3 p_{m-2} \\
    &= x p_{m-1} + \underbrace{K_{m-2}^2 p_{m-1} + p_{m-2} + K_{m-2} p_{m-2}}_{p_m} + K_{m-2}^3 p_{m-2},
  \end{align*}
  since $x K_{m-1} = K_{m-2}^2$, and $K_{m-2}^2 p_{m-2} \abs K_{m-2}^2 p_{m-1}$ by \Cref{l:triv}. Next, applying \Cref{l:xp} \Cref{l:xp1} shows that the right-hand side has exact asymptotic order $p_m + K_{m-2}^3 p_{m-2}$. The result then follows from \Cref{l:Kmk4} \Cref{l:Kmk4ii}. If $m$ is even, substituting $xK_{m-2} = K_{m-3}^2$, $xK_{m-2}^2 = K_{m-1}$, and $xK_{m-2}^3 = K_{m-1}K_{m-2}$ into the right-hand side of \Cref{eq:recA} yields
  \begin{align*}
    & (p_{m-1} + xp_{m-2}) + K_{m-1}p_{m-1} + K_{m-3}^2 p_{m-2} + K_{m-1}p_{m-2} + K_{m-1}K_{m-2}p_{m-2} \\
    &= \underbrace{p_{m-1} + K_{m-1}p_{m-2} + K_{m-1}K_{m-2}p_{m-2}}_{p_m} + K_{m-1}p_{m-1} + (p_{m-1} + K_{m-3}^2 p_{m-2}) \\
    &= p_m + \frac{K_m}{k^4} + p_{m-1} \quad \text{(by \Cref{l:Kmk4} \Cref{l:Kmk4i} and \Cref{l:triv})} \\
    &= p_m + \frac{K_m}{k^4} \quad \text{(by \Cref{l:triv}).}
  \end{align*}
\end{proof}

\section{A geometric series versus deformed Jacobsthal polynomials}\label{s:geo}
In addition to the deformed Jacobsthal polynomials, the geometric series $S = \sum_{j=0}^\infty k^{2j} x^j$ must also be considered because it is closely related to the initial conditions of recurrence \eqref{eq:Aro} (see \Cref{l:2}); consequently, as we show in \Cref{thm:main}, its monomials appear in the tail of the \emph{reflected} polynomial of $t_{m,n}$ when $m \geq 4$, $n$ is odd, and $n \to \infty$. To pave the way for this result, this section investigates the algebraic behavior of $S$ when combined with the deformed Jacobsthal polynomials within the dioid $\DiZ$. First, observe that $S$ is idempotent in $\DiZ$:
\begin{lemma}\label{l:SIdem}
  \[
    S^2 = S.
  \]
\end{lemma}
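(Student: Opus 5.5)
We have to show that $S^2=S$ in $\DiZ$, where $S=\sum_{j\ge 0}k^{2j}x^j$. The plan is to compute the Cauchy product coefficientwise and then reduce each coefficient using the dioid structure of $E=\F/\Theta$.

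Multiplication in $\DiZ$ is the usual Cauchy product, with coefficients in $E$. So the coefficient of $x^j$ in $S^2$ is
\[
\sum_{i=0}^{j} k^{2i}k^{2(j-i)} = \sum_{i=0}^{j} k^{2j},
\]
which is a sum of $j+1$ copies of the class of $k^{2j}$ in $E$.

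Idempotency of addition in $E$ (the property $f+f\in\Theta(f)$ recalled in \Cref{s:defs}) gives $f+f=f$ in $E$. Applying this repeatedly, a finite sum of $j+1$ equal terms collapses to a single term. In the function picture this is simply $(j+1)k^{2j}\in\Theta(k^{2j})$, because $j$ is fixed while $k\to\infty$. Hence the coefficient of $x^j$ in $S^2$ is $k^{2j}$, the same as in $S$, for every $j$. Since equality in $E[[x]]$ is coefficientwise, this gives $S^2=S$.

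There is no real obstacle. The one point to be careful about is that the idempotent collapse is applied to each coefficient separately. Each coefficient involves only finitely many summands, namely $j+1$ for fixed $j$, so the $\Theta$-constant may depend on $j$, and this is harmless because no uniformity in $j$ is required.
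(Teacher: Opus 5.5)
Your proposal is correct and essentially matches the paper's proof: both reduce to the observation that the coefficient of $x^j$ in $S^2$ is $(j+1)k^{2j}\in\Theta(k^{2j})$, with the $\Theta$-constant allowed to depend on $j$. The only difference is that the paper gets the factor $j+1$ by formally differentiating $1/(1-k^2x)$, while you get it directly from the Cauchy product. Your route is slightly more self-contained, since it never leaves the coefficient semiring, but the difference is cosmetic.
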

\begin{proof}
  Since $S = \frac{1}{1-k^2x}$, formal differentiation of both sides with respect to $x$ yields
  \[
    S^2 = \frac{1}{(1-k^2x)^2} = 1 + 2(k^2x) + 3(k^2x)^2 + \cdots = S.
  \]
\end{proof}

To analyze the interplay between the geometric series $S$ and the deformed Jacobsthal polynomials, we establish the following absorption bounds.

\begin{lemma}\label{l:3}
  \begin{enumerate}
    \item\label{l:3i} If $m \geq 1$, the partial sum $\sum_{j=0}^{J_m-1} k^{2j-2} x^j$ is absorbed by $p_m$.
    \item\label{l:3ii} Let $m \geq 3$. If $m$ is odd, then the sum
      \[
        \sum_{u=J_{m-2}}^{2J_{m-2}-1} k^{2u-2+2J_{m-3}} x^u + \sum_{u=2J_{m-2}}^{J_m-1} k^{2u-2+4J_{m-3}} x^u
      \]
      is absorbed by $p_m$. On the other hand, if $m$ is even, the sum
      \begin{equation}\label{eq:inter2ev}
        \sum_{u=J_{m-1}}^{2^{m-2}-1}k^{2u-2+2J_{m-2}}x^u+\sum_{u=2^{m-2}}^{2^{m-2}+J_{m-2}-1} k^{2u-2+2^{m-2}} x^u
      \end{equation}
      is also absorbed by $p_m$.
  \end{enumerate}
\end{lemma}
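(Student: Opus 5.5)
The plan is to prove both parts by strong induction on $m$, using only the recursive definition of $p_m$ and \Cref{p:Aabs}. In $\DiZ$, a sum of monomials $\sum_u c_u x^u$ with $c_u\in E$ is absorbed by $p_m$ exactly when each monomial $c_u x^u$ is absorbed by $p_m$; by \Cref{p:Aabs} \Cref{p:Aabs3} it suffices to absorb each piece separately. Multiplying an absorption $q\abs p_j$ by a monomial $K_j$ preserves it, by \Cref{p:Aabs} \Cref{p:Aabs1}. Every summand of the recursion is absorbed by $p_m$, by \Cref{p:Aabs} \Cref{p:Aabs2}. So the whole task is to split the index range $0\le u\le J_m-1$ into blocks, each covered by one summand of the recursion times an earlier $p_j$. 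In every case the exponent check should reduce to the Jacobsthal identities
\[
J_m=J_{m-1}+2J_{m-2},\qquad J_{m-1}+J_{m-2}=2^{m-2},
\]
which follow from \Cref{eq:cldJ}.

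\textbf{Part \Cref{l:3i}.} The base cases are direct: $p_1=p_2=1$ absorbs $k^{-2}$, and $p_3=1+k^2x+k^4x^2$ absorbs $k^{-2}+x+k^2x^2$.

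\emph{Odd $m\ge5$.} Here $p_m=K_{m-2}^2p_{m-1}+p_{m-2}+K_{m-2}p_{m-2}$, and I split $[0,J_m)$ into three blocks.
\begin{itemize}
\item $[0,J_{m-2})$ is covered by $p_{m-2}$, by induction.
\item $[J_{m-2},2J_{m-2})$ is covered by $K_{m-2}p_{m-2}$. Shifting the inductive bound by $K_{m-2}=k^{2^{m-2}}x^{J_{m-2}}$ gives the coefficient $k^{2^{m-2}+2(u-J_{m-2})-2}$, so one needs $2^{m-2}\ge 2J_{m-2}$.
\item $[2J_{m-2},J_m)$, of length $J_{m-1}$, is covered by $K_{m-2}^2p_{m-1}$. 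This needs $2^{m-1}\ge4J_{m-2}$.
\end{itemize}

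\emph{Even $m\ge4$.} Here $p_m=p_{m-1}+K_{m-1}p_{m-2}+K_{m-1}K_{m-2}p_{m-2}$, and the three blocks are covered in order.
\begin{itemize}
\item $[0,J_{m-1})$ is covered by $p_{m-1}$.
\item $[J_{m-1},2^{m-2})$ is covered by $K_{m-1}p_{m-2}$, which needs $2^{m-1}\ge2J_{m-1}$.
\item $[2^{m-2},J_m)$ is covered by $K_{m-1}K_{m-2}p_{m-2}$, using $J_m=2^{m-2}+J_{m-2}$. This needs $2^{m-1}+2^{m-2}\ge 2\cdot2^{m-2}$.
\end{itemize}
All of these inequalities are immediate from \Cref{eq:cldJ}.

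\textbf{Part \Cref{l:3ii}.} I use the same decomposition, discarding the lowest block, and check that the gains are now exactly the refined exponents; they turn out to be equalities.
\begin{itemize}
\item Odd $m$, first sum: from $K_{m-2}p_{m-2}$ and part \Cref{l:3i}, the exponent is $2u-2+2^{m-2}-2J_{m-2}$. This equals $2u-2+2J_{m-3}$ because $2^{m-2}=2(J_{m-2}+J_{m-3})$.
\item Odd $m$, second sum: from $K_{m-2}^2p_{m-1}$, the exponent is $2u-2+2^{m-1}-4J_{m-2}=2u-2+4J_{m-3}$.
\item Even $m$, first sum of \eqref{eq:inter2ev}: from $K_{m-1}p_{m-2}$, the exponent is $2u-2+2^{m-1}-2J_{m-1}=2u-2+2J_{m-2}$.
\item Even $m$, second sum: from $K_{m-1}K_{m-2}p_{m-2}$, the exponent is $2u-2+2^{m-1}+2^{m-2}-2^{m-1}=2u-2+2^{m-2}$. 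Its upper limit is $2^{m-2}+J_{m-2}-1=J_m-1$.
\end{itemize}
The case $m=3$ uses $p_1=1$ and $p_2=1$ directly, so part \Cref{l:3i} is only ever invoked for indices at least $1$.

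The main obstacle is bookkeeping rather than any idea. One must verify that the block endpoints match exactly, which rests on $J_m=J_{m-1}+2J_{m-2}$ and $J_{m-1}+J_{m-2}=2^{m-2}$. One must also handle parity carefully in the recursion coefficients $K_{m-2}^{2\od(m)}$ and $K_{m-1}^{\ev(m)}$, and the small cases $m=3,4$ where $p_{m-2}$ or $p_{m-3}$ is trivial.
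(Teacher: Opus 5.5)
Your proposal is correct and follows essentially the same route as the paper: part (i) by induction on $m$, splitting $[0,J_m)$ into the three blocks given by the summands of the recursion, with the shifted partial sums absorbed because the exponent gains ($2^{m-2}-2J_{m-2}$, $2^{m-1}-4J_{m-2}$, $2^{m-1}-2J_{m-1}$, $2^{m-2}$) are nonnegative. Part (ii) uses the same blocks, where these gains are exactly $2J_{m-3}$, $4J_{m-3}$, $2J_{m-2}$, $2^{m-2}$. The only cosmetic difference is that you treat $m=3$ as an extra base case, whereas the paper's inductive step from $m=2$ already covers it using $p_1$ and $p_2$.
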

\begin{proof}
  \begin{enumerate}
 \item We proceed by induction on $m$. For the base cases $m=1,2$, direct evaluation gives us
    \[
      p_m + \sum_{j=0}^{J_m-1} k^{2j-2} x^j = 1 + \frac{1}{k^2} = 1= p_m.
    \]
    Assume now that $m \geq 2$. For the inductive step, recall that $J_{n+1} + J_n = 2^n$ holds for any non-negative integer $n$, by \Cref{eq:cldJ}. If $m$ is even, then from the recursive definition of $p_{m+1}$ and the induction hypothesis, it follows that
    \begin{align*}
       & p_{m+1} + \sum_{j=0}^{J_{m+1}-1} k^{2j-2} x^j =\left( p_{m-1} + \sum_{j=0}^{J_{m-1}-1} k^{2j-2} x^j \right) \\
        &\quad + \left( K_{m-1} p_{m-1} + \sum_{j=J_{m-1}}^{2J_{m-1}-1} k^{2j-2} x^j \right)  + \left( K_{m-1}^2 p_m + \sum_{j=2J_{m-1}}^{J_{m+1}-1} k^{2j-2} x^j \right) \\
        &=  p_{m-1} + K_{m-1} \left( p_{m-1} + \sum_{u=0}^{J_{m-1}-1} k^{2u-2-2J_{m-2}} x^u \right) \\
        &\quad + K_{m-1}^2 \left( p_m + \sum_{u=0}^{J_m-1} k^{2u-2-4J_{m-2}} x^u \right) \\
        &=  p_{m-1} + K_{m-1} \biggl(  p_{m-1} + \sum_{j=0}^{J_{m-1}-1} k^{2j-2} x^j 
          + \sum_{u=0}^{J_{m-1}-1} k^{2u-2-2J_{m-2}} x^u \biggr) \\
        &\qquad\qquad\qquad\qquad + K_{m-1}^2 \biggl( p_m + \sum_{j=0}^{J_m-1} k^{2j-2} x^j 
          + \sum_{u=0}^{J_m-1} k^{2u-4J_{m-2}-2} x^u \biggr) \\
        &=  p_{m-1} + K_{m-1} \left(  p_{m-1} + \sum_{j=0}^{J_{m-1}-1} k^{2j-2} x^j \right)  \\
        &\qquad\qquad\qquad\qquad\qquad\qquad\qquad\qquad+ K_{m-1}^2 \left(  p_m + \sum_{j=0}^{J_m-1} k^{2j-2} x^j \right) \\
        &=  p_{m-1} + K_{m-1} p_{m-1} + K_{m-1}^2 p_m = p_{m+1}.
    \end{align*}
Similarly, if $m$ is odd, the induction hypothesis, together with the identities $J_{n+1}+J_n=2^n$ and $J_{n+2}-J_n=2^n$ for all $n\geq 0$, implies that
\begin{align*}
  \qquad &p_{m+1} + \sum_{j=0}^{J_{m+1}-1} k^{2j-2} x^j 
    = \left( p_m + \sum_{j=0}^{J_m-1} k^{2j-2} x^j \right) \\ 
    &\quad\qquad\qquad + K_m \left( p_{m-1} + \sum_{j=0}^{J_{m-1}-1} k^{2j-2} x^j + \sum_{u=0}^{J_{m-1}-1} k^{2u-2-2J_{m-2}} x^u \right) \\
    &\quad\qquad + K_m K_{m-1} \left( p_{m-1} + \sum_{j=0}^{J_{m-1}-1} k^{2j-2} x^j + \sum_{u=0}^{J_{m-1}-1} k^{2u-2-2^{m-1}} x^u \right) \\
    &= p_m + K_m p_{m-1} + K_m K_{m-1} p_{m-1} = p_{m+1}.
\end{align*}

\item Assume that $m$ is odd. Then, the recursive definition of $p_m$ leads to
    \begin{align*}
     \qquad\quad p_m &+ \sum_{u=J_{m-2}}^{2J_{m-2}-1} k^{2u-2+2J_{m-3}} x^u + \sum_{u=2J_{m-2}}^{J_m-1} k^{2u-2+4J_{m-3}} x^u \\
      &= p_{m-2} + K_{m-2} \left( p_{m-2} + \sum_{u=J_{m-2}}^{2J_{m-2}-1} k^{2u-2+2J_{m-3}-2^{m-2}} x^{u-J_{m-2}} \right) \\
      &\quad\qquad\qquad + K_{m-2}^2 \left( p_{m-1} + \sum_{u=2J_{m-2}}^{J_m-1} k^{2u-2+4J_{m-3}-2^{m-1}} x^{u-2J_{m-2}} \right) \\
      &= p_{m-2} + K_{m-2} \left( p_{m-2} + \sum_{j=0}^{J_{m-2}-1} k^{2j-2} x^j \right) \\
      &\quad \quad\qquad\qquad+ K_{m-2}^2 \left( p_{m-1} + \sum_{j=0}^{J_{m-1}-1} k^{2j-2} x^j \right) \\
      &= p_{m-2} +  K_{m-2} p_{m-2} +  K_{m-2}^2 p_{m-1} \quad \text{(by \Cref{l:3} \Cref{l:3i})} \\
      &=  p_m.
    \end{align*}
    In the even case, the recursive definition of $p_m$ implies that $p_m$ absorbs $K_{m-1}K_{m-2}p_{m-2}$ and $K_{m-1}p_{m-2}$. To obtain the result, we only need to prove that each sum in \Cref{eq:inter2ev} is absorbed by $p_m$, according to \Cref{p:Aabs} \Cref{p:Aabs3}. Indeed,
  \begin{align*}
   \qquad p_m &+ \sum_{u=2^{m-2}}^{2^{m-2}+J_{m-2}-1} k^{2u-2+2^{m-2}} x^u \\
        &= p_m + K_{m-1}K_{m-2} \left( p_{m-2} + \sum_{u=2^{m-2}}^{2^{m-2}+J_{m-2}-1} k^{2u-2-2^{m-1}} x^{u-2^{m-2}} \right) \\
        &= p_m + K_{m-1}K_{m-2} \left( p_{m-2} + \sum_{j=0}^{J_{m-2}-1} k^{2j-2} x^j \right) \\
        &= p_m + K_{m-1}K_{m-2} p_{m-2} \quad \text{(by \Cref{l:3} \Cref{l:3i})} \\
        &= p_m,
  \end{align*}
  and 
  \begin{align*}
  	\qquad p_m+\sum_{u=J_{m-1}}^{2^{m-2}-1}k^{2u-2+2J_{m-2}}x^u
	& = p_m+K_{m-1}\left(p_{m-2}+\sum_{j=0}^{J_{m-2}-1}k^{2j-2}x^j\right)\\
	&= p_m +K_{m-1}p_{m-2}= p_m.
  \end{align*}
    \end{enumerate}
\end{proof}

We establish the following lemma to describe how powers of the polynomial $p_m$ interact with the geometric series $S$ under exact asymptotic order.
\begin{lemma}\label{l:4}
  For every $m \geq 2$, the following statements hold:
  \begin{enumerate}
    \item\label{l:4i}
      \[
        p_m S = p_m + \frac{K_m}{k^2} S.
      \]
    \item\label{l:4ii} For any $\ell \geq 1$,
      \[
        p_m^\ell S = \sum_{u=0}^{\ell-1}\left(\frac{K_m}{k^2}\right)^u p_m^{\ell-j}p_m+ \left(\frac{K_m}{k^2}\right)^\ell S.
      \]
    \item\label{l:4iii} For any $\ell \geq 1$,
      \[
        (p_m + K_m S)^\ell = \sum_{j=0}^{\ell-1} K_m^j p_m^{\ell-j} + K_m^\ell S.
      \]
  \end{enumerate}
\end{lemma}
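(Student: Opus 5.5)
The approach is to reduce everything to one \emph{shift identity} for $p_m$, namely
\[
(1+k^2x)\,p_m \;=\; p_m + \frac{K_m}{k^2}\qquad (m\geq 2),
\]
which is the analogue of \Cref{l:xp} \Cref{l:xp0} with $x$ replaced by $k^2x$. Once this identity is established, (i) follows by expanding $S=\sum_{j\geq 0}(k^2x)^j$. Part (ii) then follows by induction on $\ell$, and part (iii) by a binomial expansion together with \Cref{l:SIdem} and \Cref{c:NoPow}. In (ii) I read the exponent $p_m^{\ell-j}p_m$ as $p_m^{\ell-u}$, so that the claim is $p_m^\ell S=\sum_{u=0}^{\ell-1}(K_m/k^2)^u p_m^{\ell-u}+(K_m/k^2)^\ell S$.

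For the shift identity I would induct on $m$ exactly as in \Cref{l:xp} \Cref{l:xp0}, checking the bases $m=2,3$ directly. For the inductive step, multiply the recursive definition of $p_m$ by $(1+k^2x)$ and apply the induction hypothesis to $(1+k^2x)p_{m-1}$ and $(1+k^2x)p_{m-2}$. This produces $p_m$ plus the extra monomials $K_{m-2}^{2\od(m)}K_{m-1}/k^2$, $K_{m-1}^{\ev(m)}K_{m-2}/k^2$ and $K_{m-1}^{\ev(m)}K_{m-2}^2/k^2$.

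All of these except the one of highest degree should be absorbed by the corresponding summands $K\cdot p_{j}$ of $p_m$, since each $p_j$ with $j\ge 1$ has constant term $1$. For example, in the even case $K_{m-1}/k^2\abs K_{m-1}p_{m-2}$. The surviving monomial should equal $K_m/k^2$ by \Cref{eq:Ks}, e.g.\ $K_{m-1}K_{m-2}^2/k^2$ in the even case.

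This bookkeeping of which correction monomials survive is the main obstacle, especially in the odd case. I expect the odd case to need $K_{m-2}^2\cdot K_{m-1}/k^2=K_m/k^2$, together with absorption of $K_{m-2}/k^2$ and $K_{m-2}^2/k^2$ by $p_{m-2}+K_{m-2}p_{m-2}$ and $K_{m-2}^2p_{m-1}$.

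Given the shift identity, multiplying by $(k^2x)^i$ and summing by induction on $j$ gives
\[
\sum_{i=0}^{j}(k^2x)^ip_m=p_m+\frac{K_m}{k^2}\sum_{i=0}^{j-1}(k^2x)^i .
\]
Each coefficient of $p_mS$ involves only finitely many terms, so letting $j\to\infty$ coefficientwise yields (i).

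For (ii), assume the formula for $\ell$ and multiply it by $p_m$. The last term becomes $(K_m/k^2)^\ell p_mS$, which (i) rewrites as $(K_m/k^2)^\ell p_m+(K_m/k^2)^{\ell+1}S$. This yields the formula for $\ell+1$. For (iii), expand $(p_m+K_mS)^\ell$ binomially and use $S^i=S$ for $i\ge1$ (\Cref{l:SIdem}), so that every mixed term has the form $K_m^{i}p_m^{\ell-i}S$. Each such term is rewritten by (ii), which yields only terms $K_m^{a}p_m^{b}$ with $k$-weights at most those of the target, together with $K_m^\ell S$ up to powers of $k^{-2}$ that are absorbed by $K_m^\ell S$. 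Finally, \Cref{c:NoPow} is used to see that all terms $K_m^jp_m^{b}$ collapse to, and are absorbed by, $\sum_{j=0}^{\ell-1}K_m^jp_m^{\ell-j}$.
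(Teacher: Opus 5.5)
Your proposal is correct, and for part (i) it takes a genuinely different route from the paper. The paper proves (i) by induction on $m$ working directly with $p_mS$. It expands $p_{m+1}S$ through the recursion, applies the hypothesis to $p_mS$ and $p_{m-1}S$, and writes the three resulting tails (with coefficients $K_m/k^2$, $K_mK_{m-1}/k^2$, $K_{m+1}/k^2$) as explicit series. It merges these by comparing exponents across the cut points $J_m<2^{m-1}<J_{m+1}$, and then uses \Cref{l:3} \Cref{l:3ii}, itself built on \Cref{l:3} \Cref{l:3i}, to absorb the finite middle pieces into $p_{m+1}$.

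You instead isolate the local identity $(1+k^2x)p_m=p_m+K_m/k^2$. It says exactly that the $k$-exponents of consecutive coefficients of $p_m$ increase by at least $2$ and that the top coefficient is $k^{2^m-4}$. Its proof is the induction of \Cref{l:xp} \Cref{l:xp0} verbatim. The leftovers $K_{m-1}/k^2$ and $K_{m-1}K_{m-2}/k^2$ (even case) and $K_{m-2}/k^2$ and $K_{m-2}^2/k^2$ (odd case) are absorbed because $p_{m-2}$ and $p_{m-1}$ have constant term $1$, as you anticipate. Telescoping, plus the fact that each coefficient of $p_mS$ is a finite sum, then gives (i).

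Your route is shorter and makes \Cref{l:3} superfluous, since that lemma is used only here. The paper's route instead records explicitly which blocks of monomials are absorbed by $p_m$.

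Parts (ii) and (iii) agree with the paper's argument, with two small points. In (iii) you correctly keep the $i=0$ binomial term as $p_m^\ell$; the paper writes $p_m^\ell S$ there, which is equal only after an absorption. You also do not need \Cref{c:NoPow}: after regrouping by $w=i+u$, every term has the form $K_m^w k^{-2u} p_m^{\ell-w}$ and is absorbed directly by $K_m^w p_m^{\ell-w}$, which has the same power of $p_m$.
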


\begin{proof}
  \begin{enumerate}
 \item The base cases $m=2,3$ follow by straightforward calculations. 
Now assume that $m \geq 3$. If $m$ is odd, from the recursive definition of $p_{m+1}$ and the induction hypothesis, we obtain
  \begin{align*}
    p_{m+1} S &= p_{m+1} + \frac{K_m}{k^2} S + \frac{K_m K_{m-1}}{k^2} S + \frac{K_{m+1}}{k^2} S \\
      &= p_{m+1} + \sum_{u=J_m}^\infty k^{2u-2+2J_{m-1}} x^u + \sum_{u=2^{m-1}}^\infty k^{2u-2+2^{m-1}} x^u \\
      &\quad + \sum_{u=J_{m+1}}^\infty k^{2u-2+2J_m} x^u.
  \end{align*}
  Since $J_{m+1} > 2^{m-1} > J_{m}>2^{m-2}$, it follows that
  \begin{multline*}
   \qquad\quad p_{m+1} S = p_{m+1} + \sum_{u=J_m}^{2^{m-1}-1}k^{2u-2+2J_{m-1}}x^u\\+\sum_{u=2^{m-1}}^{J_{m+1}-1} k^{2u-2+2^{m-1}} x^u + \sum_{u=J_{m+1}}^\infty k^{2u-2+2J_m} x^u.
  \end{multline*}
 Using \Cref{l:3} \Cref{l:3ii} for $m+1$ and the equation $2^{m-1} + J_{m-1} = J_{m+1}$, we arrive at
  \[
    p_{m+1} S = p_{m+1} + \frac{K_{m+1}}{k^2} S.
  \]
  Analogous computations hold if $m$ is even.
  
\item This follows from \Cref{l:4i} by a routine induction argument on $\ell$.

\item By the binomial theorem, the idempotency of $S$, and \Cref{l:4ii}, we obtain
  \begin{align*}
    (p_m + K_m S)^\ell 
      &= \sum_{j=0}^\ell K_m^j p_m^{\ell-j} S \\
      &= \sum_{j=0}^{\ell-1} \sum_{u=0}^{\ell-j-1} \frac{K_m^{u+j}}{k^{2u}} p_m^{\ell-j-u} + \left( \sum_{j=0}^{\ell-1} \frac{1}{k^{2(\ell-j)}} \right) K_m^\ell S + K_m^\ell S \\
      &= \sum_{j=0}^{\ell-1} \sum_{u=0}^{\ell-j-1} \frac{K_m^{u+j}}{k^{2u}} p_m^{\ell-j-u} + K_m^\ell S.
  \end{align*}
  Since, in general, $\sum_{j=0}^{\ell-1} \sum_{u=0}^{\ell-j-1} a_{j,u} = \sum_{u=0}^{\ell-1} \sum_{j=0}^{\ell-u-1} a_{j,u}$, we can rewrite the sum as
  \[
    (p_m + K_m S)^\ell = \sum_{u=0}^{\ell-1} \sum_{j=0}^{\ell-u-1} \frac{K_m^{u+j}}{k^{2u}} p_m^{\ell-j-u} + K_m^\ell S.
  \]
 Consider the change of variables $w = u + j$ for $0 \leq u \leq \ell - 1$ and $0 \leq j \leq \ell - u - 1$. Then $0 \leq w \leq \ell - 1$. For each such $w$, we define the set $A_w$ of integer pairs $(j,u)$ satisfying $w = j + u$ within these ranges:
  \[
    A_w = \{(j,u) \in \mathbb{Z} \times \mathbb{Z} \mid w = j + u, \, 0 \leq u \leq \ell - 1, \, 0 \leq j \leq \ell - u - 1\}.
  \]
  Then,
  \[
    (p_m + K_m S)^\ell = \sum_{w=0}^{\ell-1} \sum_{(j,u) \in A_w} \frac{K_m^w}{k^{2u}} p_m^{\ell-w} + K_m^\ell S.
  \]
  Since $(w,0) \in A_w$ for every $0 \leq w \leq \ell - 1$, it follows that
  \[
    \sum_{(j,u) \in A_w} \frac{K_m^w}{k^{2u}} = K_m^w,
  \]
  which completes the proof.
   \end{enumerate}
\end{proof}

\section{Deformed Jacobsthal and independence Polynomials}\label{s:JacSst}

In this section, we relate independence polynomials to deformed Jacobsthal polynomials by solving recurrence~\eqref{eq:Aro} within the dioid $\DiZ$ using its \emph{reflected} form. We define the reflection operator $R : \N[x] \to \N[x]$ by $R[q(x)] = x^d q(1/x)$, where $d = \deg(q)$. By definition, $R$ is multiplicative and almost additive: for any $p, q \in \N[x]$ with degrees $d \geq e$,
\begin{equation}\label{eq:R}
  R[pq] = R[p]R[q], \quad R[p+q] = R[p] + x^{d-e}R[q].
\end{equation}
We apply $R$ because the log-concavity breaks of $t_{m,n}$ typically occur near its degree. Reflecting the polynomial shifts these breaks toward the constant terms (the ground semiring $\N$), making them easier to analyze. To apply $R$ to recurrence~\eqref{eq:Aro}, we must first determine the degree of $t_{m,n}$.

\begin{lemma}\label{l:1}
  If $n$ is odd and $m \geq 0$, then
  \[
    \deg t_{m,n} = 2^{m-1}(n-1) + \frac{5 \cdot 2^m}{3} + \frac{(-1)^{m+1}}{6} - \frac{1}{2}.
  \]
\end{lemma}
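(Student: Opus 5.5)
Write $d_m=\deg t_{m,n}$ and $n=2k+1$. The plan is to derive a linear recurrence for $d_m$ from \Cref{eq:Aro}, check the two initial values, and show that the closed form satisfies the same recurrence.

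First, the initial values. Since $I(P_r;x)$ has degree $\lceil r/2\rceil$, we get
\[
d_0=\deg I(P_{n+1};x)=\tfrac{n+1}{2}=k+1,
\qquad
d_1=\deg I(P_{2n+3};x)=n+2 .
\]
Evaluating the claimed formula at $m=0$ gives $\tfrac{n-1}{2}+\tfrac53-\tfrac16-\tfrac12=\tfrac{n+1}{2}$. At $m=1$ it gives $(n-1)+\tfrac{10}{3}+\tfrac16-\tfrac12=n+2$. Both agree.

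Next, the recurrence. Every $t_{m,n}$ has nonnegative integer coefficients and constant term $1$, so degrees add under products and no cancellation occurs in sums. Hence \Cref{eq:Aro} gives
\[
d_m=\max\{2d_{m-1},\,1+4d_{m-2}\},\qquad m\ge 2.
\]
This max is the only real obstacle: we must determine which branch wins.

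Write $f(m)$ for the claimed formula. Compute
\[
2f(m-1)=2^{m-1}(n-1)+\tfrac{5\cdot 2^m}{3}+\tfrac{(-1)^m}{3}-1,
\]
\[
1+4f(m-2)=2^{m-1}(n-1)+\tfrac{5\cdot 2^m}{3}+\tfrac{2(-1)^{m+1}}{3}-1 .
\]
Their difference is $2f(m-1)-(1+4f(m-2))=(-1)^m$. So for even $m$ the first branch wins, and for odd $m$ the second wins.

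It remains to check that the winning branch equals $f(m)$. For even $m$,
\[
f(m)=2^{m-1}(n-1)+\tfrac{5\cdot2^m}{3}-\tfrac23=2f(m-1),
\]
and for odd $m$,
\[
f(m)=2^{m-1}(n-1)+\tfrac{5\cdot2^m}{3}-\tfrac13=1+4f(m-2).
\]
Thus $f(m)=\max\{2f(m-1),1+4f(m-2)\}$ in both cases.

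By strong induction on $m$, assuming $d_{m-1}=f(m-1)$ and $d_{m-2}=f(m-2)$, the recurrence yields $d_m=f(m)$ for all $m\ge 0$. The oddness of $n$ ensures $\tfrac{n+1}{2}$ is an integer, so $d_0$ matches the formula, which is why the hypothesis is needed.
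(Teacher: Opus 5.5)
Your proof is correct and follows the paper's argument. Both use induction with base cases $m=0,1$ obtained from $\deg I(P_r;x)=\lceil r/2\rceil$. Both then resolve $\deg t_{m,n}=\max\{2\deg t_{m-1,n},\,1+4\deg t_{m-2,n}\}$ by comparing the two branches, which differ by $(-1)^m$, so the parity of $m$ decides which branch gives the closed form.
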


 \begin{proof}
  We proceed by induction on $m$. The base cases $m=0$ and $m=1$ hold since $t_{0,n} = I(P_{n+1};x)$ with $\deg I(P_n;x) = \lceil n/2 \rceil$, and $\deg t_{1,n} = n+2$.
  For the inductive step, applying the hypothesis to \Cref{eq:Aro} gives
  \[
    \deg t_{m+1,n} = 2^m(n-1) + \frac{5 \cdot 2^{m+1}}{3} - 1 + \max \left\{ \frac{(-1)^{m+1}}{3}, \frac{2(-1)^m}{3} \right\}.
  \]
  Regardless of the parity of $m$, the maximum term evaluates exactly to $\frac{(-1)^{m+2}}{6} + \frac{1}{2}$. Substituting this back gives
  \[
    \deg t_{m+1,n} = 2^m(n-1) + \frac{5 \cdot 2^{m+1}}{3} + \frac{(-1)^{m+2}}{6} - \frac{1}{2},
  \]
  which completes the proof.
\end{proof}  

Note that the sequence defined by the differences $(\deg t_{m,n} - 2^{m-1}(n-1))_{m \geq 0}$ corresponds to sequence A081254 in the OEIS \cite{oeisA081254}. As an immediate consequence of \Cref{l:1}, we obtain the reflected version of recurrence \eqref{eq:Aro}.

\begin{theorem}\label{thm:rev}
  Let $n$ be an odd integer. Then, for $m \geq 2$,
  \[
    R[t_{m,n}] = x^{\od(m)} R[t_{m-1,n}]^2 + x^{\ev(m)} R[t_{m-2,n}]^4.
  \]
\end{theorem}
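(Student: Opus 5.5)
Write $d_m=\deg t_{m,n}$, as given by \Cref{l:1}. The plan is to apply the reflection operator $R$ directly to recurrence \eqref{eq:Aro}, $t_{m,n}=t_{m-1,n}^2+x\,t_{m-2,n}^4$, using the multiplicativity and almost-additivity of $R$ in \Cref{eq:R}. The only real work is to determine which of the two summands has the larger degree and the exact degree gap, and to check that this gap is $\od(m)$ on one side and $\ev(m)$ on the other.

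The computation itself is short. Set $a=2d_{m-1}$, the degree of $t_{m-1,n}^2$, and $b=1+4d_{m-2}$, the degree of $x\,t_{m-2,n}^4$. By \Cref{l:1},
\[
  a=2^{m-1}(n-1)+\tfrac{5\cdot 2^m}{3}+\tfrac{(-1)^m}{3}-1,\qquad b=2^{m-1}(n-1)+\tfrac{5\cdot 2^m}{3}+\tfrac{2(-1)^{m-1}}{3}-1,
\]
so $a-b=(-1)^m$. This is the same comparison already carried out in the inductive step of \Cref{l:1}.

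Next I split by parity. If $m$ is even, then $a=b+1=d_m$. Since all coefficients lie in $\N$, no leading cancellation can occur, and \Cref{eq:R} gives
\[
R[t_{m,n}]=R[t_{m-1,n}]^2+x^{a-b}R[x\,t_{m-2,n}^4].
\]
If $m$ is odd, then $b=a+1=d_m$, and
\[
R[t_{m,n}]=R[x\,t_{m-2,n}^4]+x\,R[t_{m-1,n}]^2.
\]
The remaining point is $R[x\,q]=R[q]$. This holds because $R[x]=x^1\cdot(1/x)=1$, so it follows from multiplicativity (or directly from $x^{e+1}(1/x)q(1/x)=x^e q(1/x)$). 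Together with $R[q^j]=R[q]^j$, the even case gives $R[t_{m-1,n}]^2+x\,R[t_{m-2,n}]^4$, and the odd case gives $x\,R[t_{m-1,n}]^2+R[t_{m-2,n}]^4$. Both are exactly $x^{\od(m)}R[t_{m-1,n}]^2+x^{\ev(m)}R[t_{m-2,n}]^4$.

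The only step needing care is the degree bookkeeping, which I expect to be the main obstacle. One must confirm that $a-b$ is exactly $\pm1$ rather than merely checking which summand is larger. One must also check that the statement's exponents match: the extra power of $x$ multiplies the lower-degree reflected term. For example, when $m$ is even, $t_{m-2,n}^4$ carries $x^{\ev(m)}=x$. The case $m=2$ should also be checked, since it uses $d_0=\lceil (n+1)/2\rceil$ and $d_1=n+2$ for odd $n$, and \Cref{l:1} already covers these values.
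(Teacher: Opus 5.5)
Your proposal is correct and follows essentially the same route as the paper. You compute the degrees of $t_{m-1,n}^2$ and $x\,t_{m-2,n}^4$ from \Cref{l:1}, find that they differ by exactly one (your $a-b=(-1)^m$), and then apply the multiplicativity and almost-additivity of $R$. Your explicit degree computation, the remark that nonnegative coefficients rule out leading cancellation, and the identity $R[xq]=R[q]$ spell out details that the paper's brief proof leaves implicit.
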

\begin{proof}
  \Cref{l:1} implies the following:
  \begin{enumerate}
    \item If $m$ is odd, then $\deg t_{m,n} = \deg(x t_{m-2,n}^4)$ and
      \[
        \deg(x t_{m-2,n}^4) - \deg(t_{m-1,n}^2) = 1;
      \]
    \item If $m$ is even, then $\deg t_{m,n} = \deg(t_{m-1,n}^2)$ and
      \[
        \deg(t_{m-1,n}^2) - \deg(x t_{m-2,n}^4) = 1.
      \]
  \end{enumerate}
  Combining these degree relations with the multiplicative and almost-additive properties in \eqref{eq:R} of the reflection operator $R$ completes the proof.
\end{proof}

Fix a positive integer $w$. To state our main theorem, we consider the canonical projection morphism $\pi_w \colon \Ring \to \Ring/\langle x^w\rangle$ onto the Bourne factor semiring \cite[p.~97]{Golan}, where $\langle x^w\rangle$ is the ideal generated by $x^w$ in $\Ring$.

\begin{definition}
  If $p, q \in \Ring$ satisfy $\pi_w(p) = \pi_w(q)$, we write
  \[
    p = q \pmod{x^w}.
  \]
\end{definition}
The following lemma establishes the relationship between the initial conditions of recurrence \eqref{eq:Aro} and the geometric series $S = \sum_{j=0}^\infty k^{2j}x^j$.
\begin{lemma}\label{l:2} 
  Let $w$ be a fixed positive integer. Then,
  \[
    R[I(P_{2k}, x)] = kS \pmod{x^w} \quad \text{and} \quad R[I(P_{2k+1}, x)] = S \pmod{x^w}.
  \]
\end{lemma}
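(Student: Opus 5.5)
The plan is to compute the coefficients of both reflected path polynomials explicitly and read off their exact asymptotic order as $k\to\infty$. Reducing modulo $x^w$ leaves only finitely many coefficients, indexed by $0\le i\le w-1$. In the dioid $E$ each coefficient is identified with its $\Theta$-class as a function of $k$, so it suffices to show, for each fixed $i<w$, that the coefficient of $x^i$ lies in $\Theta(k^{2i+1})$ for $P_{2k}$ and in $\Theta(k^{2i})$ for $P_{2k+1}$. These are exactly the coefficients of $x^i$ in $kS$ and $S$. Since $\Theta$ only concerns behaviour for large $k$, we may assume $k\ge w$ throughout, so that every index $i<w$ lies within the degree range.

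First I will recall the classical closed form for the path. The number of independent $j$-sets in $P_n$ is $\binom{n-j+1}{j}$, so
\[
I(P_n;x)=\sum_{j\ge 0}\binom{n-j+1}{j}x^j .
\]
This follows from the recurrence \eqref{eq:Aro1} applied at an end vertex, $I(P_n)=I(P_{n-1})+xI(P_{n-2})$, together with Pascal's rule. It gives $\deg I(P_{2k};x)=k$ and $\deg I(P_{2k+1};x)=k+1$, consistent with $\lceil n/2\rceil$ as used in \Cref{l:1}.

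Second, I will apply the reflection $R$. The coefficient of $x^i$ in $R[I(P_{2k};x)]$ is the coefficient of $x^{k-i}$ in $I(P_{2k};x)$, namely
\[
\binom{2k-(k-i)+1}{k-i}=\binom{k+i+1}{2i+1}.
\]
Similarly, the coefficient of $x^i$ in $R[I(P_{2k+1};x)]$ is
\[
\binom{(2k+1)-(k+1-i)+1}{k+1-i}=\binom{k+i+1}{2i}.
\]

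Third, for fixed $i$ the binomial $\binom{k+i+1}{r}$ with $r$ fixed is a polynomial in $k$ of degree $r$ with positive leading coefficient $1/r!$. Hence it is $\Theta(k^{r})$, and in $E$ it equals the class of $k^r$, because positive constants are absorbed in $\Theta$. With $r=2i+1$ this gives $k\cdot k^{2i}$, and with $r=2i$ it gives $k^{2i}$. Assembling the coefficients for $i<w$ yields both congruences modulo $x^w$, since $\pi_w$ only sees these coefficients.

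There is no real obstacle here. The only points requiring care are bookkeeping of the reflected index and noting that equality in $E$ is eventual in $k$, which justifies ignoring small $k$ where some indices $i<w$ exceed the degree.
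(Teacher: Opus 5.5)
Your proposal is correct and follows essentially the same route as the paper. The paper cites Hopkins for the explicit reflected coefficients $\binom{k+j+1}{2j+1}$ and $\binom{k+j+1}{2j}$ and reads off their $\Theta(k^{2j+1})$ and $\Theta(k^{2j})$ orders for $j<w$; you derive the same coefficients yourself from $I(P_n;x)=\sum_j\binom{n-j+1}{j}x^j$, which also avoids relying on the reference the paper notes contains a misprint.
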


\begin{proof}
  From \cite[Prop.~1(iv)]{Hopkins}\footnote{There is an obvious misprint in this reference: the term $n-k-1$ should be $n-k+1$.}, we obtain
  \[
    R[I(P_{2k}, x)] = \sum_{j=0}^k \tbinom{k+j+1}{2j+1} x^j, \quad R[I(P_{2k+1}, x)] = \sum_{j=0}^{k+1} \tbinom{k+j+1}{2j} x^j,
  \]
  where, for any $0 \leq j < w$, the factorial formula for binomial coefficients implies
  \[
    \tbinom{k+j+1}{2j+1} \in \Theta(k^{2j+1}), \quad \tbinom{k+j+1}{2j} \in \Theta(k^{2j}),
  \]
  from which the result follows.
\end{proof}

We are now ready to state the main theorem of this section, which relates the independence polynomials of the spherically symmetric trees $T(2^m 1^n)$ to the deformed Jacobsthal polynomials.
\begin{theorem}\label{thm:main}
  Let $w$ be a fixed positive integer. For every $m \geq 1$ and odd integer $n = 2k+1$, 
  \[
    R[t_{m,n}] = p_m + k^{2^m} x^{J_m} \sum_{j=0}^\infty k^{2j} x^j \pmod{x^w}.
  \]
\end{theorem}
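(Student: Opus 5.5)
We will prove the statement by strong induction on $m$, working throughout in $\Ring$ modulo $x^w$ and writing $q_m = p_m + K_m S$. The reason this should work is that $\pi_w$ is a semiring morphism, so congruences modulo $x^w$ are preserved by sums and products. Hence once the identity holds for $m-1$ and $m-2$, \Cref{thm:rev} reduces the inductive step to the purely algebraic identity
\[
  q_m = x^{\od(m)} q_{m-1}^2 + x^{\ev(m)} q_{m-2}^4
\]
in $\Ring$. Here $n$ is fixed odd, so the reflected recurrence applies for every $m\ge 2$.

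\textbf{Base cases.} We take $m=1,2,3$ as base cases, since \Cref{l:Alst} needs $m\ge 4$.
\begin{itemize}
\item For $m=1$: $t_{1,n}=I(P_{2n+3};x)=I(P_{2(2k+2)+1};x)$, so \Cref{l:2} gives $R[t_{1,n}]=S=1+k^2xS=p_1+K_1S \pmod{x^w}$. This uses $J_1=1$, $K_1=k^2x$, and the fact that $1+k^2xS=S$ termwise.
\item For $m=0$ (needed as input): $t_{0,n}=I(P_{2k+2};x)$, so \Cref{l:2} gives $R[t_{0,n}]=(k+1)S=kS$. This matches $p_0+K_0S=kS$, since $p_0=0$ and $K_0=k$.
\item For $m=2,3$: these follow from \Cref{thm:rev} by direct expansion. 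We use $S^2=S$ (\Cref{l:SIdem}) together with the explicit forms $p_2=1+k^2x+k^3x$ and $p_3$, checking coefficientwise which $\Theta$-class dominates.
\end{itemize}

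\textbf{Inductive step for $m\ge4$.} By \Cref{l:4} \Cref{l:4iii},
\[
  q_{m-1}^2=p_{m-1}^2+K_{m-1}p_{m-1}+K_{m-1}^2S
\]
and
\[
  q_{m-2}^4=\sum_{j=0}^{3}K_{m-2}^jp_{m-2}^{4-j}+K_{m-2}^4S.
\]
By \Cref{c:NoPow} (with $\ell=1$ and $\ell=3$), all the powers of $p$ collapse, so these become $\sum_{j=0}^1K_{m-1}^jp_{m-1}+K_{m-1}^2S$ and $\sum_{j=0}^3K_{m-2}^jp_{m-2}+K_{m-2}^4S$. Substituting into the right-hand side, \Cref{l:Alst} turns the polynomial part into $p_m+K_m/k^4$.

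The $S$-part is $x^{\od(m)}K_{m-1}^2S+x^{\ev(m)}K_{m-2}^4S$. By \eqref{eq:Ks}, $x^{\od(m)}K_{m-1}^2=x^{\ev(m)}K_m$, and $K_{m-2}^4=K_m^2/K_{m-1}^2$. A short parity computation shows that one of the two monomials equals $K_m$ and the other equals $K_m k^{c}x^{e}$ with $c\le 2e$, so it is absorbed by $K_mS$:
\begin{itemize}
\item For $m$ even, $K_{m-1}^2=xK_m$, so $K_{m-2}^4=K_m/(xK_{m-1}^{0})\cdots$; concretely $K_{m-2}^4=K_m K_{m-2}^2/K_{m-1}$, which equals $x^{-1}$ times a bounded multiple of $K_m$. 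So the first term is $K_mS$ and the second is absorbed.
\item For $m$ odd, $xK_{m-1}^2=xK_m$, and $K_{m-2}^4=K_m^2/K_{m-1}^2 = K_m\cdot K_{m-2}^2/K_{m-1}=xK_m$.
\end{itemize}
In either case the total is $K_mS$ plus an absorbed term.

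This leaves $p_m+K_m/k^4+K_mS$. Since $S$ has constant term $1$, we get $K_m/k^4\abs K_mS$, and the result is $q_m$.

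\textbf{Main obstacle.} The hardest step is the exact bookkeeping of the $S$-terms and the small-$m$ base cases. In particular, we must verify that the extra $S$-monomial is absorbed by $K_mS$ coefficientwise, which requires each degree-$j$ coefficient's exponent of $k$ to be dominated. For $m=2,3$ we must check by hand that cross terms such as $p_{m-1}K_{m-1}S$ do not exceed $p_m+K_mS$; for these we would invoke \Cref{l:4} \Cref{l:4i} directly.
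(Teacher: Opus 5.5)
Your proposal is correct and is essentially the paper's proof: induction through \Cref{thm:rev}, \Cref{l:4}~\Cref{l:4iii}, \Cref{c:NoPow} and \Cref{l:Alst}, then absorbing the stray $xK_mS$ and $K_m/k^4$ into $K_mS$. Making $m=3$ a base case is more careful than the paper, whose step from $m=2$ to $m=3$ invokes \Cref{l:Alst} outside its range $m\ge 4$ (there it yields $k^6x^3$ rather than $k^4x^3$, which is harmless only because $K_3S$ absorbs it), but fix three slips: $p_2=1$ since $p_0=0$ (the $m=3$ check is then just $x(1+K_2S)^2+S=S+K_3S=p_3+K_3S$); for $m$ odd \eqref{eq:Ks} gives $xK_{m-1}^2=K_m$, not $xK_m$; and for $m$ even it is $xK_{m-2}^4=K_m$ that survives while $K_{m-1}^2S=xK_mS$ is the absorbed term, the reverse of what you wrote.
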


\begin{proof}
  We proceed by induction on $m$. For $m=1$, \Cref{l:2} proves that
  \begin{align*}
    R[t_{1,n}] &= R[I(P_{2k+3}; x)] 
              = S \pmod{x^w} \\
               &= p_1 + K_1 S.
  \end{align*}
  For $m=2$, applying \Cref{thm:rev}, \Cref{l:2,l:SIdem} gives
  \begin{align*}
    R[t_{2,n}] &= R[t_{1,n}]^2 + x R[t_{0,n}]^4 
               = S + \sum_{j=0}^{\infty} k^{2j+4} x^{j+1} \pmod{x^w} \\
               &= 1 + \sum_{j=1}^{\infty} k^{2j+2} x^j 
               = p_2 + K_2 \sum_{u=0}^\infty k^{2u} x^u.
  \end{align*}
Now assume $m \geq 2$. If $m$ is even, then the induction hypothesis, \Cref{thm:rev}, and \Cref{l:4} \Cref{l:4iii} ensure that
  \begin{align*}
    R[t_{m+1,n}] &= R[t_{m-1,n}]^4 + x R[t_{m,n}]^2 \\
                 &= (p_{m-1} + K_{m-1} S)^4 + x(p_m + K_m S)^2 \pmod{x^w} \\
                 &= \sum_{j=0}^3 K_{m-1}^j p_{m-1}^{4-j} + K_{m-1}^4 S + x p_m^2 + x K_m p_m + K_{m+1} S,
  \end{align*}
  where $K_{m-1}^4 S \abs K_{m+1} S$ since $K_{m-1}^4 = x K_{m+1}$ and $xS \abs S$.
Thus,
  \begin{align*}
    R[t_{m+1,n}] &= x \sum_{j=0}^1 K_m^j p_m^{2-j} + \sum_{j=0}^3 K_{m-1}^j p_{m-1}^{4-j} + K_{m+1} S \pmod{x^w}\\
                 &= x \sum_{j=0}^1 K_m^j p_m + \sum_{j=0}^3 K_{m-1}^j p_{m-1} + K_{m+1} S \quad {\text{(by \Cref{c:NoPow})}} \\
                 &= p_{m+1} + \frac{K_{m+1}}{k^4} + K_{m+1} S \quad{\text{(by \Cref{l:Alst})} }\\
                 &= p_{m+1} + K_{m+1} S.
  \end{align*}
  The case where $m$ is odd is handled analogously.
\end{proof}

To use \Cref{thm:main} to reveal breaks in the log-concavity of the polynomials $t_{m,n}$, we must first determine the degree of the deformed Jacobsthal polynomials.
\begin{lemma}\label{l:degp_m}
  For all $m \geq 1$, $\deg p_m = J_m - 1$.
\end{lemma}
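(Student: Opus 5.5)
We argue by strong induction on $m$, tracking only $x$-degrees. In $\Ring$ the coefficients are nonnegative classes, and a sum of nonzero classes is nonzero. Hence there is no cancellation: the degree of a sum is the maximum of the degrees of the summands, and the degree of a product of nonzero series is the sum of their degrees. The base cases are immediate. For $m=1$ we have $p_1=1$, so $\deg p_1=0=J_1-1$. For $m=2$ the recurrence gives $p_2=p_1+K_1(1+K_0)p_0=1$, since $p_0=0$, so $\deg p_2=0=J_2-1$. For $m=3$ we get $p_3=K_1^2p_2+(1+K_1)p_1=k^2x^2+1+k^2x$, of degree $2=J_3-1$.

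For the inductive step take $m\ge 4$, so that $p_{m-1}$ and $p_{m-2}$ are nonzero with degrees $J_{m-1}-1$ and $J_{m-2}-1$. Note that $\deg K_j=J_j$ and $\deg(1+K_{m-2})=J_{m-2}$.

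\emph{Case $m$ odd.} Here $p_m=K_{m-2}^2p_{m-1}+(1+K_{m-2})p_{m-2}$. The two summands have degrees
\[
2J_{m-2}+J_{m-1}-1=J_m-1 \quad\text{and}\quad 2J_{m-2}-1.
\]
The first equality uses $J_m=J_{m-1}+2J_{m-2}$. The maximum is $J_m-1$, since $J_{m-1}\ge 0$.

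\emph{Case $m$ even.} Here $p_m=p_{m-1}+K_{m-1}(1+K_{m-2})p_{m-2}$. The two summands have degrees
\[
J_{m-1}-1 \quad\text{and}\quad J_{m-1}+J_{m-2}+J_{m-2}-1=J_m-1.
\]
Again the maximum is $J_m-1$.

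The only point needing care is the no-cancellation property, namely that the leading coefficients of the summands cannot cancel in the dioid. This holds because $E$ is zero-sum-free: $f+g\in\Theta(0)$ forces $f$ and $g$ to be eventually zero. It also needs the leading coefficients of $p_j$ to be nonzero classes. That is clear, since every coefficient arising from the recursion is a nonzero power of $k$ (more precisely, a finite sum of such powers), so no class collapses to $0$.

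Alternatively, one can bypass the dioid altogether. The $p_m$ have nonnegative real coefficients as polynomials in $x$ with coefficients monomials in $k$, so degrees add under multiplication and take the maximum under addition. The computation above applies verbatim.
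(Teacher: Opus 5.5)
Your proof is correct and follows essentially the paper's argument: induction on $m$ through the defining recurrence, reading off the degree of each summand and using $J_m=J_{m-1}+2J_{m-2}$. The paper expands $(1+K_{m-2})$ into separate summands and leaves the no-cancellation point implicit. You justify it explicitly, but note that degrees add under products here because all leading coefficients are powers of $k$, not because of a general property of $\Ring$: $E$ does have zero divisors. One harmless slip: $K_1^2=k^4x^2$, so $p_3=1+k^2x+k^4x^2$, which does not affect the degree.
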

\begin{proof}
  Since $p_1 = p_2 = 1$ and $J_1 - 1 = J_2 - 1 = 0$, the result holds for the base cases $m=1, 2$. Assume $m \geq 2$. If $m+1$ is even, then
  \[
    p_{m+1} = p_m + K_m p_{m-1} + K_m K_{m-1} p_{m-1},
  \]
  and applying the induction hypothesis yields
  \[
    \deg p_{m+1} = \max \{ J_m - 1, J_m + J_{m-1} - 1, \underbrace{J_m + 2J_{m-1}}_{J_{m+1}} - 1 \} = J_{m+1} - 1.
  \]
  The odd case follows symmetrically.
\end{proof}

The next lemma shows that the deformed Jacobsthal polynomials exhibit breaks in log-concavity in the following sense. From the definition of these polynomials, we can write $p_m = \sum_{j=0}^{J_m-1} a_j(k) x^j$, where, for all $0 \leq j < J_m$, each $a_j(k)$ is a polynomial in $k$ with natural coefficients (in fact, a power of $k$). We say that $p_m$ has \emph{an asymptotic break in log-concavity} at index $u$ if
\begin{equation}\label{eq:asympBrk}
  \deg(a_{u-1}(k)) + \deg(a_{u+1}(k)) > 2 \deg(a_u(k)).
\end{equation}

\begin{lemma}\label{l:breaks}
  For all $m \geq 3$, $p_m$ has at least $J_{m-3}$ asymptotic breaks in log-concavity.
\end{lemma}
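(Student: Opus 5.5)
The plan is to read off the exponents of $k$ in the coefficients of $p_m$ directly from its recursive definition. The key observation is that the three summands in the recursion occupy \emph{disjoint, consecutive} blocks of $x$-degrees. Asymptotic breaks inside each block are therefore inherited from the smaller polynomials, and counting them gives a Jacobsthal-type recurrence.

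Write $e_m(j)=\deg a_j(k)$ for $0\le j\le J_m-1$. First I will check by induction, together with \Cref{l:degp_m} and the constant term $1$, that every coefficient $a_j(k)$ is a single nonzero power of $k$, so $e_m(j)$ is well defined. Then I split by parity.

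For $m$ even, the recursion is $p_m=p_{m-1}+K_{m-1}p_{m-2}+K_{m-1}K_{m-2}p_{m-2}$. By \Cref{l:degp_m}, its three summands are supported on the following index ranges:
\begin{itemize}
\item $[0,J_{m-1}-1]$,
\item $[J_{m-1},J_{m-1}+J_{m-2}-1]$,
\item $[J_{m-1}+J_{m-2},J_m-1]$.
\end{itemize}
These ranges are disjoint and cover $[0,J_m-1]$, because $J_m=J_{m-1}+2J_{m-2}$.

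For $m$ odd, the recursion is $p_m=p_{m-2}+K_{m-2}p_{m-2}+K_{m-2}^2p_{m-1}$. Its summands occupy the following ranges, which again tile $[0,J_m-1]$:
\begin{itemize}
\item $[0,J_{m-2}-1]$,
\item $[J_{m-2},2J_{m-2}-1]$,
\item $[2J_{m-2},J_m-1]$.
\end{itemize}
Hence no dioid absorption occurs between blocks. The exponent sequence $e_m$ is the concatenation of the sequences $e_{m-1}$ and $e_{m-2}$, each shifted in index and translated by a constant: $0$, $2^{m-1}$, or $2^{m-1}+2^{m-2}$ in the even case, and $0$, $2^{m-2}$, or $2^{m-1}$ in the odd case.

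The inequality \eqref{eq:asympBrk} is invariant under adding a constant to all three of $e(u-1),e(u),e(u+1)$. So every asymptotic break of $p_j$ at an index $u$ with $1\le u\le J_j-2$ reappears as a break of $p_m$ at the shifted index. Such an index is automatically interior to its block, so breaks coming from different blocks are distinct. Let $b_m$ be the number of asymptotic breaks of $p_m$. In both parities the blocks consist of one copy of $p_{m-1}$ and two copies of $p_{m-2}$, which gives
\[
b_m\ \ge\ b_{m-1}+2b_{m-2}.
\]
Since $J_{m-3}=J_{m-4}+2J_{m-5}$, induction yields $b_m\ge J_{m-3}$ for all $m\ge 5$, once the two base cases $m=3,4$ are settled.

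The base cases are a direct computation. For $m=3$ we need $b_3\ge J_0=0$, which is trivial; for the record, $p_3=1+k^2x+k^4x^2$. For $m=4$ we have $K_3=k^8x^3$ and $K_2=k^4x$, so
\[
p_4=1+k^2x+k^4x^2+k^8x^3+k^{12}x^4 ,
\]
with exponent sequence $0,2,4,8,12$. At $u=2$ we get $2+8>2\cdot 4$, so $b_4\ge 1=J_1$.

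The main point requiring care is the claim that the blocks really are disjoint, so that the dioid sum is literally a concatenation of coefficient sequences rather than a max over overlapping terms. This rests entirely on the degree count of \Cref{l:degp_m} and the constant term $1$ of each $p_j$ for $j\ge1$. Everything else is bookkeeping with the index shifts.
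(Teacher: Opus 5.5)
Your proposal is correct and follows the paper's proof. Both use the recursion to split $p_m$ into one shifted copy of $p_{m-1}$ and two shifted copies of $p_{m-2}$ with disjoint, consecutive degree ranges (via \Cref{l:degp_m}), inherit the breaks blockwise to get $b_m \ge b_{m-1}+2b_{m-2}$, and close the induction with $p_3$ and $p_4$. Your version is just more explicit than the paper's: it treats both parities, checks full support, and verifies that \eqref{eq:asympBrk} is invariant under the exponent shifts and that inherited break indices stay interior to their block.
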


\begin{proof}
  The base cases hold directly: $p_3 = 1 + k^2 x + k^4 x^2$ has $J_0 = 0$ asymptotic breaks in log-concavity, and $p_4 = 1 + k^2 x + k^4 x^2 + k^8 x^3 + k^{12} x^4$ breaks log-concavity at index $2$, giving $J_1 = 1$ break. For odd $m > 4$, we have $p_m = p_{m-2} + K_{m-2} p_{m-2} + K_{m-2}^2 p_{m-1}$. By \Cref{l:degp_m}, these three summands have disjoint degree ranges ($0$ to $J_{m-2}-1$, $J_{m-2}$ to $2J_{m-2}-1$, and $2J_{m-2}$ to $J_m-1$, respectively), so their breaks do not interfere. Therefore, by the induction hypothesis, $p_m$ inherits at least $2J_{m-5} + J_{m-4} = J_{m-3}$ asymptotic breaks in log-concavity. The case where $m$ is even is analogous.
\end{proof}

\begin{corollary}\label{c:final}
  For any $m \geq 3$ and sufficiently large odd integer $n$, the independence polynomial of the spherically symmetric tree $T(2^m 1^n)$ has at least $J_{m-3}$ breaks in log-concavity.
\end{corollary}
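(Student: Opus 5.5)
The plan is to transfer the asymptotic breaks of $p_m$ given by \Cref{l:breaks} to genuine breaks in the coefficient sequence of $t_{m,n}$ near its top degree, using \Cref{thm:main} with a suitable window $w$. Fix $m\ge 3$ and take $w=J_m+1$. Write $n=2k+1$ and $d=\deg t_{m,n}$, as computed in \Cref{l:1}. Let $b_j(k)$ denote the coefficient of $x^j$ in $R[t_{m,n}]$, so that $b_j(k)$ is the coefficient of $x^{d-j}$ in $t_{m,n}$. Log-concavity of a sequence is invariant under reversal, so a break of $(b_j)$ at index $u$, meaning $b_u^2<b_{u-1}b_{u+1}$, is a break of $t_{m,n}$ at index $d-u$. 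Distinct $u$ therefore give distinct breaks of $t_{m,n}$.

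Next I read off the exact orders. By \Cref{thm:main}, for $0\le j<w$ the class of $b_j$ in $E$ equals the coefficient of $x^j$ in $p_m+K_mS$. By \Cref{l:degp_m}, $p_m$ has degree $J_m-1$, while $K_mS$ starts at $x^{J_m}$. Hence for $0\le j\le J_m-1$ we get $b_j(k)\in\Theta(a_j(k))$, where $a_j(k)$ is a power $k^{e_j}$. Now let $u$ be one of the at least $J_{m-3}$ asymptotic break indices of $p_m$ from \Cref{l:breaks}. Such a $u$ satisfies $1\le u\le J_m-2$, because \Cref{eq:asympBrk} involves $a_{u\pm1}$, which are coefficients of $p_m$. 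So $b_{u-1}$, $b_u$, $b_{u+1}$ are all governed by $p_m$. Then
\[
\frac{b_{u-1}(k)\,b_{u+1}(k)}{b_u(k)^2}\in\Theta\!\left(k^{e_{u-1}+e_{u+1}-2e_u}\right),
\]
and the exponent is at least $1$ by \Cref{eq:asympBrk}. So the ratio tends to infinity as $k\to\infty$, and there is a $k_u$ with $b_u^2<b_{u-1}b_{u+1}$ for all $k\ge k_u$. Taking $k\ge\max_u k_u$ over the finitely many break indices gives at least $J_{m-3}$ breaks, which means every sufficiently large odd $n$ works.

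The main point to check carefully is that the equality ``mod $x^w$'' in the dioid $E[[x]]$ really determines the exact order $\Theta$ of each individual coefficient $b_j$ for $j<w$. This should hold because $\pi_w$ just truncates, and equality in $E$ is equality of $\Theta$-classes. I also need that each $b_j$ is eventually positive, which follows since its class is a nonzero power of $k$. With $\Theta$-classes fixed, the constants hidden in $\Theta$ do not matter, since the exponent gap is a positive integer and forces divergence. A minor point is that the break indices of $p_m$ avoid the boundary index $J_m-1$, where the tail $K_mS$ would start to contribute. This is automatic from the definition of an asymptotic break, which requires $u+1\le J_m-1$.
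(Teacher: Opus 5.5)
Your proposal is correct and is essentially the paper's argument: apply \Cref{thm:main} with $w\ge J_m$ to get $c_j\in\Theta(a_j)$ for $0\le j\le J_m-1$, then turn each asymptotic break of $p_m$ from \Cref{l:breaks} into a genuine break for large $k$. Your ratio estimate is equivalent to the paper's observation that $c_{u-1}c_{u+1}-c_u^2\in\Theta(a_{u-1}a_{u+1})$, and you also make explicit two points the paper leaves implicit: log-concavity is invariant under reversal, and the break indices lie in the range of coefficients determined by $p_m$.
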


\begin{proof}
  Let $R[t_{m,n}] = \sum_{j=0}^\infty c_j x^j$ and $p_m = \sum_{j=0}^{J_m-1} a_j x^j$, where $c_j, a_j \in \N[k]$. Applying \Cref{thm:main} with $w \geq J_m$ yields $c_j \in \Theta(a_j)$ for $j = 0, \dots, J_m - 1$. Consequently, every asymptotic break in log-concavity of $p_m$ guaranteed by \Cref{l:breaks} yields a break in log-concavity of $R[t_{m,n}]$ for sufficiently large $k$, since \Cref{eq:asympBrk} implies $c_{u-1} c_{u+1} - c_u^2 \in \Theta(a_{u-1} a_{u+1})$.
\end{proof}
This corollary affirmatively answers Question~3.2 in \cite{Galvin}, since $J_{m+2} > m$ for all nonnegative integers $m$.

% \bibliographystyle{siamplain}
% \bibliography{refs}
\end{document}